\documentclass[12pt]{amsart}

\usepackage{amsmath,amssymb,amsfonts,amsthm,latexsym,graphicx,multirow,hyperref}
\usepackage[all]{xy}

\oddsidemargin=0.4in
\evensidemargin=0.4in
\topmargin=-0.2in
\textwidth=15cm
\textheight=23cm

\newcommand\A{\mathrm{A}} \newcommand\AGL{\mathrm{AGL}} \newcommand\Alt{\mathrm{Alt}} \newcommand\Aut{\mathrm{Aut}}
\newcommand\Cay{\mathrm{Cay}} \newcommand\Cos{\mathrm{Cos}}
\newcommand\D{\mathrm{D}}
\newcommand\E{\mathrm{E}}
\newcommand\F{\mathrm{F}} \newcommand\Fix{\mathrm{Fix}}
\newcommand\G{\mathrm{G}} \newcommand\Ga{\Gamma} \newcommand\GL{\mathrm{GL}}
\newcommand\Nor{\mathbf{N}}
\newcommand\PGL{\mathrm{PGL}} \newcommand\PGaL{\mathrm{P\Gamma L}} \newcommand\PSL{\mathrm{PSL}} \newcommand\PSp{\mathrm{PSp}} \newcommand\PSU{\mathrm{PSU}}
\newcommand\Soc{\mathrm{Soc}} \newcommand\Sy{\mathrm{S}} \newcommand\Sym{\mathrm{Sym}} \newcommand\Sz{\mathrm{Sz}}
\newcommand\Z{\mathbf{Z}} \newcommand\ZZ{\mathrm{C}}

\newtheorem{theorem}{Theorem}[section]
\newtheorem{lemma}[theorem]{Lemma}

\newtheorem{question}[theorem]{Question}

\theoremstyle{definition}

\newtheorem{construction}[theorem]{Construction}

\newtheorem*{remark}{Remark}

\begin{document}

\title[Nonnormal cubic Cayley graphs]{An infinite family of cubic nonnormal Cayley graphs on nonabelian simple groups}

\author[Chen]{Jiyong Chen}
\address{School of Mathematical Sciences\\Peking University\\ Beijing, 100871\\ P. R. China}
\email{cjy1988@pku.edu.cn}

\author[Xia]{Binzhou Xia}
\address{School of Mathematics and Statistics\\The University of Melbourne\\Parkville, VIC 3010\\Australia}
\email{binzhoux@unimelb.edu.au}

\author[Zhou]{Jin-Xin Zhou}
\address{Department of Mathematics\\Beijing Jiaotong University\\ Beijing, 100044\\ P. R. China}
\email{jxzhou@bjtu.edu.cn}

\maketitle

\begin{abstract}
We construct a connected cubic nonnormal Cayley graph on $\mathrm{A}_{2^m-1}$ for each integer $m\geqslant4$ and determine its full automorphism group. This is the first infinite family of connected cubic nonnormal Cayley graphs on nonabelian simple groups.

\textit{Key words:} nonnormal Cayley graphs; cubic graphs; simple groups

\end{abstract}

\section{Introduction}

In this paper all graphs considered are finite, simple and undirected. Given a group $G$ and an inverse-closed subset $S$ of $G\setminus\{1\}$, the \emph{Cayley graph} $\Cay(G,S)$ on $G$ with respect to $S$ is the graph with vertex set $G$ such that two vertices $x$ and $y$ are adjacent if and only if $yx^{-1}\in S$. Let $\widehat{G}$ be the right regular representation of $G$. It is easy to see that $\widehat{G}$ is a subgroup of $\Aut(\Cay(G,S))$. Moreover, it was shown by Godsil~\cite{Godsil1981} that the normalizer of $\widehat{G}$ in $\Aut(\Cay(G,S))$ is $\widehat{G}\rtimes\Aut(G,S)$, where $\Aut(G,S)$ is the group of automorphisms of $G$ fixing $S$ setwise. In particular, $\Aut(\Cay(G,S))=\widehat{G}\rtimes\Aut(G,S)$ if and only if $\widehat{G}$ is normal in $\Aut(\Cay(G,S))$. Viewing this, Xu in~\cite{Xu1998} introduced the concept of normal Cayley graphs: a Cayley graph $\Cay(G,S)$ is said to be \emph{normal} if $\widehat{G}$ is normal in $\Aut(\Cay(G,S))$. The study of normality of a Cayley graph plays an important role in the study of its automorphism group because once a Cayley graph $\Cay(G,S)$ is known to be normal, to determine its full automorphism group one only needs to determine the group $\Aut(G,S)$, which is usually much easier. For a survey paper on normality of Cayley graphs we refer the reader to~\cite{FLX2008}.

The normality of cubic Cayley graphs on nonabelian simple groups has received considerable attention. It was proved in~\cite{Praeger1999} that a connected cubic Cayley graph $\Cay(G,S)$ with $G$ nonabelian simple is normal if $\widehat{G}\rtimes\Aut(G,S)$ is transitive on the edge set of $\Cay(G,S)$. A graph is said to be \emph{arc-transitive} if its automorphism group acts transitively on the set of arcs. In~\cite{XFWX2005,XFWX2007} it was proved that the only connected arc-transitive cubic nonnormal Cayley graphs on nonabelian simple groups are two Cayley graphs on $\A_{47}$ up to isomorphism, and their full automorphism groups are both isomorphic to $\A_{48}$. On the other hand, examples of connected cubic nonnormal Cayley graphs on nonabelian simple groups are very rare. Since the connected arc-transitive cubic nonnormal Cayley graphs on nonabelian simple groups are only the above mentioned two graphs on $\A_{47}$, we can concentrate on the non-arc-transitive case. In this context, one has the following theorem combining~\cite[Theorem~1.1]{FLWX2002} and~\cite[Theorem~1.2]{ZF2010}.

\begin{theorem}\label{thm2}
\emph{(\cite{FLWX2002,ZF2010})} Let $\Cay(G,S)$ be a connected cubic nonnormal Cayley graph on a nonabelian simple group $G$. If $\Cay(G,S)$ is not arc-transitive, then one of the following holds:
\begin{itemize}
\item[(a)] $G=\A_{2^m-1}$ with $m\geqslant3$;
\item[(b)] $G$ is a simple group of Lie type of even characteristic except $\PSL_2(2^e)$, $\PSL_3(2^e)$, $\PSU_3(2^e)$, $\PSp_4(2^e)$, $\E_8(2^e)$, $\F_4(2^e)$, $\,^2\F_4(2^e)'$, $\G_2(2^e)$ and $\Sz(2^e)$.
\end{itemize}
\end{theorem}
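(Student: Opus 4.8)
The plan is to convert the hypotheses into a statement about the permutation group $A=\Aut(\Ga)$ and then invoke the classification of finite simple groups. Write $R=\widehat G\leqslant A$ for the regular subgroup and $H=A_v$ for a vertex stabilizer. The first step is to pin down the local structure. Since $\Ga$ is cubic and vertex-transitive but not arc-transitive, it is not edge-transitive (a connected cubic graph that is vertex- and edge-transitive is arc-transitive, because by Tutte a half-arc-transitive graph has even valency), so $H$ fixes one of the three edges at $v$; examining the stabilizer chain $H\geqslant H^{[1]}\geqslant H^{[2]}\geqslant\cdots$ of the connected cubic graph $\Ga$ then shows each successive quotient, hence $H$ itself, is a $2$-group. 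Therefore $|A:R|=|H|=2^k$ with $k\geqslant2$ (if $k\leqslant1$ then $R\trianglelefteq A$, contradicting nonnormality), and the action of $A$ on the $2^k$ right cosets of $R$ is faithful, its kernel being a normal subgroup of $A$ inside the simple group $R$ and hence trivial; so $A$ is a transitive permutation group of degree $2^k$ whose point stabilizer is $R\cong G$. Finally, writing $\Ga=\Cos(A,H,D)$ with $D$ a union of $H$-double cosets of total valency $3$, the parity obstruction that $|H|$ is a power of $2$ forces $D=HgH\cup HhH$ with $\{|H:H\cap H^g|,\,|H:H\cap H^h|\}=\{1,2\}$: the valency-$1$ part is an $A$-invariant perfect matching $M$ of $\Ga$, the valency-$2$ part its complementary union of cycles, and $\langle H,g,h\rangle=A$ by connectedness. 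Everything below is read off from this skeleton.

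The second step is a minimal-normal-subgroup dichotomy for $A$. Let $N$ be a minimal normal subgroup of $A$. Since $R$ is simple and normalizes $N$, either $R\leqslant N$ or $R\cap N=1$. If $R\leqslant N$, then $N$ is a direct product of isomorphic simple groups, and since $R$ is simple with $|N:R|$ a power of $2$ the subdirect-product structure collapses $N$ to a single simple group $T$ having $R$ as a subgroup of $2$-power index; by Guralnick's classification of simple groups with a subgroup of prime-power index, the only possibility with $R$ itself simple is $T=\Alt_{2^j}$, $R=\Alt_{2^j-1}$ (the $\PSL_2(q)$ option is excluded because it would make $R$ a solvable Borel subgroup, and the $M_{11}$, $M_{23}$, $\PSU_4(2)$ options because their index is not a power of $2$). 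Promoting this to $A$ gives outcome~(a) with $m=j\geqslant3$ (the bound $m\geqslant3$ being exactly what makes $\Alt_{2^m-1}$ nonabelian simple). If instead $R\cap N=1$, then $|N|$ divides $|A:R|=2^k$, so $N$ is an elementary abelian $2$-group; here either $R$ centralizes $N$, whence $N$ lies in the left regular representation of $R$ and one reduces by passing to the quotient graph $\Ga_N$, or $R$ acts on $N$ with kernel a proper normal subgroup of the simple group $R$, hence faithfully, giving an embedding $R\hookrightarrow\GL_c(2)$ with $c=\dim_{\mathbb{F}_2}N$.

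The third step handles the remaining configuration, in which $A$ has a nontrivial normal $2$-subgroup and $G\hookrightarrow\GL_c(2)$. Here I would combine Aschbacher's subgroup-structure theorem together with CFSG and the structure of $2$-local subgroups of the ambient almost-simple or affine-type group with the graph skeleton of the first step: $H$ is a $2$-group assembled from $2$-local subgroups, and the demand that $\Ga=\Cos(A,H,HgH\cup HhH)$ be connected, cubic and not arc-transitive — that $g$ normalize $H$, that $h$ together with $H$ generate $A$, and that the valencies split $1+2$ across the invariant matching $M$ — is what trims the list. The outcome is that $G$ must be $\Alt_{2^m-1}$ (every other alternating group, and every sporadic group, being killed either at the $2$-power-index/Guralnick step or because no admissible pair $(g,h)$ exists) or a simple group of Lie type in characteristic $2$, with the families $\PSL_2(2^e)$, $\PSL_3(2^e)$, $\PSU_3(2^e)$, $\PSp_4(2^e)$, $\E_8(2^e)$, $\F_4(2^e)$, $\,^2\F_4(2^e)'$, $\G_2(2^e)$ and $\Sz(2^e)$ deleted because for these the $2$-local overgroups available to house $h$ are too restricted. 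Combining the two branches gives (a) and (b).

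The main obstacle is carrying out this last step in earnest: running through the simple groups of Lie type in even characteristic one family at a time, using detailed information on their maximal and $2$-local subgroups and on factorizations (in the spirit of Liebeck--Praeger--Saxl), in order to decide precisely which of them can support a connected cubic non-arc-transitive Cayley graph, and in particular to prove that the list of exceptions is exactly the one stated and that none is overlooked. The recurring difficulty is that the valency $3$ is odd while the automorphisms fixing a vertex form a $2$-group, so the graph is forced to carry the invariant matching and the $1+2$ double-coset split, and verifying the nonexistence of the connecting element $h$ for each excluded family (and for spurious alternating and sporadic candidates) is where the real work lies. A secondary difficulty is making the normal-quotient reduction of the second step rigorous, since neither the regularity nor the nonnormality of $R$ need survive passage to $\Ga_N$; this is handled by always carrying along the fixed $2$-power $|H|$ and using that the image of the simple group $R$ in any quotient is either trivial or isomorphic to $R$.
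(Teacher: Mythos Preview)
The paper does not prove this theorem; it is quoted from \cite{FLWX2002} and \cite{ZF2010} as a known result, with no argument given. There is thus no in-paper proof to compare your sketch against.

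Judged on its own, your outline has the right skeleton---the vertex stabilizer is a $2$-group because the cubic graph is not arc-transitive, so $|A:\widehat G|$ is a $2$-power, and one then analyses how the simple group $G$ sits inside $A$; Guralnick's theorem is indeed the tool that produces the alternating case~(a). But you yourself flag that step three is not carried out: the passage from ``$G$ embeds in some $\GL_c(2)$ with $2^c\leqslant|A_v|$'' to ``$G$ is of Lie type in characteristic~$2$ and not on the excluded list'' is exactly the substance of \cite{ZF2010}, and it requires bounding $c$ against known lower bounds on faithful $\mathbb{F}_2$-representation degrees, family by family. Without that case analysis (and without verifying that the nine listed families really are excluded while the remaining ones survive) the sketch is a plan rather than a proof. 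A smaller issue: in step two, when $R\cap N=1$ and $R$ centralises the $2$-group $N$, your proposed reduction ``pass to $\Ga_N$'' needs the observation that $N$ is semiregular on vertices (otherwise the quotient has valency at most~$2$ and the argument collapses), and you must also check that nonnormality of the image of $R$ persists in the quotient before iterating---you note this as a ``secondary difficulty'' but do not resolve it.
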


Until recently, connected cubic nonnormal Cayley graphs on the groups listed in Theorem~\ref{thm2} were only found for $\A_{15}$ and $\A_{31}$~\cite{LLW2013}. In 2008, Feng, Lu and Xu asked the following question in their survey paper~\cite{FLX2008} on normality of Cayley graphs.

\begin{question}\label{qes1}
\emph{(\cite[Problem~5.9]{FLX2008})} Are there infinitely many connected nonnormal Cayley graphs of valency $3$ or $4$ on nonabelian simple groups?
\end{question}

Question~\ref{qes1} in the valency $4$ case has been answered by Wang and Feng~\cite{WF2012} in the affirmative. In this paper, we answer the question in the remaining case. Our main result is Theorem~\ref{thm1}, which gives a positive answer to Question~\ref{qes1}.

\begin{theorem}\label{thm1}
For each integer $m\geqslant4$, there exists a graph $\Ga_m$ satisfying:
\begin{itemize}
\item[(a)] $\Ga_m$ is a connected cubic nonnormal Cayley graph on $\A_{2^m-1}$;
\item[(b)] $\Ga_m\cong\Cay(\A_{2^m-1},S)$ for some set $S$ of three involutions in $\A_{2^m-1}$ such that $\Aut(\A_{2^m-1},S)=1$;
\item[(c)] $\Aut(\Ga_m)\cong\A_{2^m}$.
\end{itemize}
\end{theorem}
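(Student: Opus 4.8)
The plan is to build $\Ga_m$ as a coset graph of $\A_{2^m}$ and only afterwards recognize it as a Cayley graph on $\A_{2^m-1}$. Take $\A_{2^m}$ in its natural action on a set $\Omega$ with $|\Omega|=2^m$, fix $\omega\in\Omega$ with $(\A_{2^m})_\omega=\A_{2^m-1}$, and choose a subgroup $K\leqslant\A_{2^m}$ of order $2^m$ acting regularly on $\Omega$; then $\A_{2^m}=\A_{2^m-1}K$ is an exact factorization with $\A_{2^m-1}\cap K=1$, so the natural bijection $K\backslash\A_{2^m}\leftrightarrow\A_{2^m-1}$ turns any $\A_{2^m}$-vertex-transitive graph on the coset space $[\A_{2^m}:K]$ into a Cayley graph on $\A_{2^m-1}$. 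I would then pick $g_1,g_2,g_3\in\A_{2^m}$ so that $\Ga_m=\Cos(\A_{2^m},K,Kg_1K\cup Kg_2K\cup Kg_3K)$ has valency $3$; note that if all three $K$-orbits on the neighbours of the base vertex had length $1$ then each $g_i\in\Nor_{\A_{2^m}}(K)$ and $\Ga_m$ would be disconnected, so exactly one orbit must have length $2$, which constrains the choice of $K$ and is where the hypothesis $m\geqslant4$ enters. Finally I would arrange the $g_i$ so that, under the identification above, the three neighbours of the identity vertex are three involutions of $\A_{2^m-1}$ with pairwise distinct cycle types; this set is $S$.

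Granting the construction, cubicity is automatic. Connectedness amounts to $\langle S\rangle=\A_{2^m-1}$, i.e.\ $\langle K,g_1,g_2,g_3\rangle=\A_{2^m}$: the latter group is transitive on $\Omega$ (it contains $K$) and is not contained in any intransitive, imprimitive or primitive proper subgroup of $\A_{2^m}$, which I would verify either by exhibiting an element of small support and invoking Jordan's theorem or by the O'Nan--Scott theorem together with the classification of finite simple groups; it equals $\A_{2^m}$ rather than $\Sym(\Omega)$ since all generators are even. Because $\A_{2^m}$ is simple, its action on $[\A_{2^m}:K]$ is faithful, so $\A_{2^m}\leqslant\Aut(\Ga_m)$, and $\widehat{\A_{2^m-1}}$ is a proper subgroup of $\Aut(\Ga_m)$ of index $2^m$.

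For (b): as $m\geqslant4$ we have $2^m-1\geqslant15$, so $\Aut(\A_{2^m-1})=\Sym_{2^m-1}$ and every element of $\Aut(\A_{2^m-1},S)$ is conjugation by some $\sigma\in\Sym_{2^m-1}$ with $\sigma S\sigma^{-1}=S$; the pairwise distinct cycle types force $\sigma$ to fix each element of $S$, hence $\sigma\in\ZZ_{\Sym_{2^m-1}}(\langle S\rangle)=\ZZ_{\Sym_{2^m-1}}(\A_{2^m-1})=1$, giving $\Aut(\A_{2^m-1},S)=1$. Nonnormality of $\Ga_m$ then follows: if $\Ga_m$ were normal, Godsil's theorem~\cite{Godsil1981} would give $\Aut(\Ga_m)=\widehat{\A_{2^m-1}}\rtimes\Aut(\A_{2^m-1},S)=\widehat{\A_{2^m-1}}$, contradicting $\A_{2^m}\leqslant\Aut(\Ga_m)$. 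This yields (a) and (b).

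The crux is part (c): $A:=\Aut(\Ga_m)=\A_{2^m}$. First, $\Ga_m$ is \emph{not} arc-transitive, for otherwise, being a connected cubic nonnormal Cayley graph on the nonabelian simple group $\A_{2^m-1}$, it would be isomorphic to one of the two arc-transitive graphs on $\A_{47}$ of~\cite{XFWX2005,XFWX2007}, which have $47!/2\neq(2^m-1)!/2$ vertices. Hence for a vertex $v$ the stabilizer $A_v$ has orbit lengths $1,1,1$ or $1,2$ on $\Ga_m(v)$; in the first case $A_v=A_u$ for every neighbour $u$ (equal orders since $A$ is vertex-transitive), so $A_v$ fixes every vertex and $A_v=1$, which is absurd since $A_v$ contains $(\A_{2^m})_v$ of order $2^m$. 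Thus the orbit lengths are $1,2$, producing an $A$-invariant perfect matching $M$ (the length-$1$ edges) with $\Ga_m\setminus M$ a disjoint union of cycles permuted by $A$. Combining this rigidity with $\A_{2^m}\leqslant A$, the maximality of $\A_{2^m-1}$ in $\A_{2^m}$, the fact that $\widehat{\A_{2^m-1}}$ is self-normalizing in $A$ (from (b) and Godsil's theorem), and a CFSG-based determination of $\Soc(A)$ that rules out affine and other proper overgroups of $\A_{2^m}$ inside $\Sym(V(\Ga_m))$ preserving $\Ga_m$, one concludes $A=\A_{2^m}$. The principal obstacle is exactly this last step: a priori $A_v$ is only known to be a $2$-group and, in the generic cubic setting, need not be bounded in terms of $2^m$, so the argument must exploit the fine structure of the construction — the cycle lengths in $\Ga_m\setminus M$, how matching edges and cycle edges are distinguished graph-theoretically, and the way $\A_{2^m}$ sits inside $A$ — to force $|A_v|=2^m$.
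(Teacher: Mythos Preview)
Your framework matches the paper's exactly --- realise $\Ga_m$ as a coset graph of $\A_{2^m}$ modulo a regular $2$-subgroup $K$ of order $2^m$, then identify it with a Cayley graph on $\A_{2^m-1}$ via the exact factorisation $\A_{2^m}=K\cdot\A_{2^m-1}$ --- but the proposal is a plan, not a proof. You never specify $K$ or the $g_i$, so every substantive step (``I would pick'', ``I would arrange'', ``I would verify'') is conditional on a construction you have not produced. The paper's content is precisely in making these choices: it takes $K=R(H)$ for a specific $H\cong\D_8\times\ZZ_2^{m-3}$ together with two explicit permutations $x,y$ of $H$, and the connectivity proof alone (that $\langle x,y,R(H)\rangle=\Alt(H)$) occupies an entire section of hand computation culminating in an appeal to the classification of $2$-transitive groups of degree $2^m$. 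Neither Jordan's theorem nor O'Nan--Scott can be invoked without a concrete object to apply them to; the question you are answering had been open precisely because no one had exhibited such an object.

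Two further points. First, your shortcut for (b) via pairwise distinct cycle types is an additional unverified constraint on the construction, and the paper does not (perhaps cannot) arrange it: for odd $m$ the two involutions in the length-$2$ orbit satisfy $z=R(h)yR(h)^{-1}$ and hence share a cycle type. The paper instead proves $\Aut(\A_{2^m-1},\{x,y,z\})=1$ by first using non-arc-transitivity to force any such automorphism to fix $x$ and swap $y,z$, and then comparing $|\Fix(y)\cap\Fix(xyx)|$ with $|\Fix(z)\cap\Fix(xzx)|$ (or a variant), in four cases according to $m\bmod 4$. Second, you have located the difficulty in the wrong place: part~(c) is \emph{not} the hard step. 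Once $\A_{2^m}\leqslant A$, non-arc-transitivity, and $\Nor_A(\widehat{\A_{2^m-1}})=\widehat{\A_{2^m-1}}$ are established, the paper finishes (c) in half a page via the Fang--Praeger--Wang socle dichotomy~\cite{FPW2002} and Guralnick's theorem~\cite{Guralnick1983}, with no further ``fine structure'' of the construction required.
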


We call a Cayley graph $\Cay(G,S)$ a \emph{graphical regular representation} (\emph{GRR} for short) of $G$ if $\Aut(\Cay(G,S))=\widehat{G}$. Note that a GRR is necessarily a normal Cayley graph, and a necessary condition for $\Cay(G,S)$ to be a GRR is that $\Aut(G,S)=1$. In many circumstances it is shown that this condition is also sufficient, see for example \cite{FLWX2002,Godsil1981,Godsil1983}. More generally, a problem is posed in~\cite{FLWX2002} to determine the groups $G$ such that a Cayley graph $\Cay(G,S)$ on $G$ is a GRR of $G$ if and only if $\Aut(G,S)=1$. We remark that our graph $\Ga_m$ in Theorem~\ref{thm1} as a Cayley graph on $G:=\A_{2^m-1}$ is not only nonnormal (and hence not a GRR) but also satisfies the condition $\Aut(G,S)=1$. It is also worth remarking that, although the graph $\Gamma_m$ is not arc-transitive, it has local action $\ZZ_2$ so that it corresponds to a tetravalent arc-transitive graph in the standard way described in~\cite[Section~4.1]{PSV2013}.

The paper is organized as follows. We shall first give the construction of $\Ga_m$ for Theorem~\ref{thm1} in Section~2. Then the entirety of section~3 will be devoted to proving the connectivity of $\Ga_m$. Finally in Section~4 we prove the remaining properties of $\Ga_m$ described in Theorem~\ref{thm1}, thus completing the proof of the theorem.

\section{Construction of $\Ga_m$}

We first introduce some notation that is fixed throughout this paper. Let $m\geqslant4$ be an integer,
\[
H=\langle a,b\mid a^4=b^2=(ab)^2=1\rangle\times\langle c_1\rangle\times\langle c_2\rangle\times\dots\times\langle c_{m-3}\rangle,
\]
where $c_1,c_2,\dots,c_{m-3}$ are involutions,
\[
K=\langle a^2,b,c_1,c_2,\dots,c_{m-3}\rangle
=\langle a^2\rangle\times\langle b\rangle\times\langle c_1\rangle\times\langle c_2\rangle\times\dots\times\langle c_{m-3}\rangle
\]
and $h=a\prod_{i=0}^{\lceil(m-5)/2\rceil}c_{2i+1}$. Clearly, $H$ is the direct product of a dihedral group $\D_8$ of order $8$ and an elementary abelian $2$-group of rank $m-3$, so that $|H|=2^m$. For the sake of convenience, put $c_{-1}=c_0=1$. Define $x\in\Aut(H)$ by letting
\[
a^x=a^{-1},\quad b^x=ab,\quad c_{2i+1}^x=c_{2i+1}\quad\text{and}\quad c_{2i+2}^x=a^2c_{2i+1}c_{2i+2}
\]
for $0\leqslant i\leqslant\lfloor(m-5)/2\rfloor$ and letting $c_{m-3}^x=a^2c_{m-3}$ in addition if $m$ is even. Define $\tau\in\Aut(K)$ by letting
\[
(a^2)^\tau=b,\quad b^\tau=a^2,\quad c_{2i+1}^\tau=c_{2i-1}c_{2i}c_{2i+2}\quad\text{and}\quad c_{2i+2}^\tau=c_{2i-1}c_{2i}c_{2i+1}
\]
for $0\leqslant i\leqslant\lfloor(m-5)/2\rfloor$ and letting $c_{m-3}^\tau=c_{m-3}$ in addition if $m$ is even. Note that $x$ and $\tau$ are indeed automorphisms of $H$ and $K$ because the images of generators under $x$ and $\tau$ satisfy the defining relations for $H$ and $K$, respectively. Denote the right regular representation of $H$ by $R$. Let $y$ be the permutation of $H$ such that $k^y=k^\tau$ and
\[
(hk)^y=
\begin{cases}
hk^\tau\quad&\text{if $m$ is odd,}\\
hk^\tau c_{m-3}\quad&\text{if $m$ is even}
\end{cases}
\]
for $k\in K$. Let
\[
z=
\begin{cases}
R(h)yR(h^{-1})\quad&\text{if $m$ is odd,}\\
R(h)yR(h^{-1}c_{m-3})\quad&\text{if $m$ is even.}
\end{cases}
\]
We will see that the three permutations $x$, $y$ and $z$ of $H$ are all involutions in $\Alt(H)$.

\begin{lemma}
$x$, $y$ and $z$ are all involutions.
\end{lemma}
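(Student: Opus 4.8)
The plan is to verify each of the three permutations separately, exploiting the structure $H = D_8 \times E$ with $E$ elementary abelian of rank $m-3$, and the index-$2$ subgroup $K$ on which $\tau$ acts.

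First I would handle $x$. Since $x\in\Aut(H)$, it suffices to check $(g^x)^x = g$ on the generators $a, b, c_1,\dots,c_{m-3}$. The identities $a^{x^2} = (a^{-1})^x = a$ and $b^{x^2} = (ab)^x = a^x b^x = a^{-1}(ab) = b$ are immediate. For the $c$'s one computes $(c_{2i+1}^x)^x = c_{2i+1}$ trivially, and $(c_{2i+2}^x)^x = (a^2 c_{2i+1} c_{2i+2})^x = (a^2)^x c_{2i+1}^x c_{2i+2}^x = a^2 \cdot c_{2i+1} \cdot a^2 c_{2i+1} c_{2i+2} = c_{2i+2}$ using that $a^2$ is central of order $2$ and the $c$'s are commuting involutions; the extra relation $c_{m-3}^x = a^2 c_{m-3}$ in the even case is checked the same way. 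To see $x\in\Alt(H)$, note $x$ fixes the identity (being an automorphism), so it is a product of cycles on $H\setminus\{1\}$; one argues it is even by the standard fact that an automorphism of a $2$-group acting on its $2^m - 1$ non-identity elements, having order $2$ here, has a fixed-point set that is a subgroup (or directly: the number of non-fixed points is divisible by $4$ since $x$ centralizes an index-$\le 2$ chunk). I would make this precise by identifying $\Fix_H(x)$ as a subgroup and showing $[H : \Fix_H(x)]$ is a power of $2$ at least $4$, so $x$ moves a multiple of $4$ points and hence lies in $\Alt(H)$.

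Next, for $y$: by definition $y$ acts on the coset $K$ as $\tau$ and on the coset $hK$ as $k \mapsto hk^\tau$ (odd case) or $k \mapsto hk^\tau c_{m-3}$ (even case). So $y^2$ restricted to $K$ is $\tau^2$, which I check on generators exactly as for $x$ — e.g. $(a^2)^{\tau^2} = b^\tau = a^2$, and $c_{2i+1}^{\tau^2} = (c_{2i-1}c_{2i}c_{2i+2})^\tau$, expanding via $c_{-1} = c_0 = 1$ and collecting commuting involutions to recover $c_{2i+1}$ (this expansion is the most calculation-heavy part, but it is purely formal cancellation in an elementary abelian group once one tracks that each $c_j$ with $j\ge 1$ appears an even number of times except $c_{2i+1}$). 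On the coset $hK$, applying $y$ twice sends $hk \mapsto h(hk^\tau c_{m-3}^{[\text{even}]})^{?}$ — more carefully, $y$ maps $hk \in hK$ to an element of $hK$ again, namely $h\cdot(k^\tau c_{m-3}^{\epsilon})$ where $\epsilon$ is $0$ or $1$, so $y^2(hk) = h\cdot((k^\tau c_{m-3}^\epsilon)^\tau c_{m-3}^\epsilon) = h\cdot k^{\tau^2}(c_{m-3}^\tau)^\epsilon c_{m-3}^\epsilon$, and since $c_{m-3}^\tau = c_{m-3}$ in the even case this collapses to $h k^{\tau^2} = hk$. Thus $y^2 = 1$. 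For $y\in\Alt(H)$: $y$ interchanges no points between $K$ and $hK$, acts as $\tau$ on $K$ (even, by the same subgroup argument), and on $hK$ acts as a coset-translate of $\tau$ composed with multiplication by $c_{m-3}^\epsilon$; a short count shows the number of fixed points on $hK$ differs from that on $K$ by a multiple of $4$ (or is likewise a $2$-power index thing), so $y$ moves a multiple of $4$ points overall.

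Finally $z$ is a conjugate of $y$: $z = R(h)\,y\,R(h^{-1})$ in the odd case and $z = R(h)\,y\,R(h^{-1}c_{m-3})$ in the even case. In the odd case $z = y^{R(h^{-1})}$ is literally a conjugate of the involution $y$, hence an involution, and conjugation by $R(h^{-1})\in\widehat H \le \Alt(H)$ preserves parity, so $z\in\Alt(H)$. The even case is the only genuinely delicate point, since $R(h)\,y\,R(h^{-1}c_{m-3})$ is a priori not a conjugate: I would write it as $z = \bigl(R(h)\,y\,R(h^{-1})\bigr)\cdot R(h c_{m-3} h^{-1}) = y^{R(h^{-1})}\cdot R(c_{m-3})$ (using $h c_{m-3} h^{-1} = c_{m-3}$ as $c_{m-3}$ is central in $H$ — wait, $c_{m-3}$ commutes with $h$ since $h = a\prod c_{2i+1}$ and $a$ commutes with every $c_j$), and then check $z^2 = 1$ by direct computation, using that $y$ normalizes $\widehat K$ in a controlled way: $R(k)^y = R(k^\tau)$ on $K$. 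Concretely $z^2 = y^{R(h^{-1})} R(c_{m-3}) y^{R(h^{-1})} R(c_{m-3})$, and one pushes the middle $R(c_{m-3})$ through $y^{R(h^{-1})}$ using $R(c_{m-3})^{y} = R(c_{m-3}^\tau) = R(c_{m-3})$, whence $z^2 = (y^{R(h^{-1})})^2 R(c_{m-3})^2 = 1$. The membership $z\in\Alt(H)$ then follows since $z$ is a product of elements of $\Alt(H)$. The main obstacle is bookkeeping in the even case — keeping straight which definition of $y$, $h$, and the $c_{m-3}$-corrections apply, and confirming $c_{m-3}$ commutes with $h$ and is $\tau$-fixed so that all the correction terms cancel in pairs — but none of it is conceptually hard once the commutation relations are laid out.
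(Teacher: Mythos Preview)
Your argument that $x^2=y^2=1$, and that $z^2=1$ when $m$ is odd, follows the paper exactly: check on generators for $x$ and $\tau$, then split $H=K\cup hK$ for $y$, and observe $z$ is conjugate to $y$ in the odd case.

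For $z^2=1$ with $m$ even you take a genuinely different route. The paper computes $g^{z^2}$ and $(hg)^{z^2}$ by brute force, pushing each element through the string $R(h)yR(c_{m-3})yR(h^{-1}c_{m-3})$. You instead write $z=y^{R(h^{-1})}R(c_{m-3})$ and argue that $R(c_{m-3})$ commutes with $y^{R(h^{-1})}$, so $z^2=(y^{R(h^{-1})})^2R(c_{m-3})^2=1$. This is cleaner, and it works because (i) $c_{m-3}$ is central in $H$, so $R(c_{m-3})$ commutes with $R(h)$, and (ii) $y^{-1}R(c_{m-3})y=R(c_{m-3}^\tau)=R(c_{m-3})$. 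One caution: your phrase ``$R(k)^y=R(k^\tau)$ on $K$'' is ambiguous; the identity must hold as permutations of all of $H$, so you should check it on $hK$ as well (it does hold there, using $c_{m-3}^\tau=c_{m-3}$ and centrality of $c_{m-3}$). With that filled in your argument is complete and arguably more conceptual than the paper's line-by-line verification.

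Finally, the material on membership in $\Alt(H)$ is extraneous here: the lemma as stated only asserts that $x,y,z$ are involutions. The paper establishes $x\in\Alt(H)$ and $\langle x,y,R(H)\rangle\leqslant\Alt(H)$ in two separate subsequent lemmas, by different arguments from the ones you sketch (decomposing an arbitrary automorphism via the quotient $H/\Z(H)$, and a commuting-square argument for $y$, respectively).
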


\begin{proof}
It is evident that none of $x$, $y$ and $z$ is trivial. Since $x^2$ fixes each of the generators $a,b,c_1,\dots,c_{m-3}$ of $H$, we have $x^2=1$. Similarly, $\tau^2=1$. Let $g$ be an arbitrary element of $K$. Then $g^{y^2}=(g^\tau)^y=(g^\tau)^\tau=g^{\tau^2}=g$. If $m$ is odd, then $(hg)^{y^2}=(hg^\tau)^y=h(g^\tau)^\tau=hg^{\tau^2}=hg$ and so $y^2=1$, which in turn implies that $z^2=R(h)y^2R(h^{-1})=1$. Now assume that $m$ is even. Then
\[
(hg)^{y^2}=(hg^\tau c_{m-3})^y=h(g^\tau c_{m-3})^\tau c_{m-3}=hg^{\tau^2}c_{m-3}^\tau c_{m-3}=hg^{\tau^2}=hg,
\]
whence $y^2=1$. Moreover,
\begin{align*}
g^{z^2}&=g^{R(h)yR(c_{m-3})yR(h^{-1}c_{m-3})}\\
&=(hh^{-1}gh)^{yR(c_{m-3})yR(h^{-1}c_{m-3})}\\
&=\left(h(h^{-1}gh)^\tau\right)^{yR(h^{-1}c_{m-3})}\\
&=h\left((h^{-1}gh)^\tau \right)^\tau c_{m-3}h^{-1}c_{m-3}\\
&=h(h^{-1}gh)h^{-1}\\
&=g
\end{align*}
and
\begin{align*}
(hg)^{z^2}&=(hg)^{R(h)yR(c_{m-3})yR(h^{-1}c_{m-3})}\\
&=(hgh)^{yR(c_{m-3})yR(h^{-1}c_{m-3})}\\
&=\left((hgh)^\tau\right)^{R(c_{m-3})yR(h^{-1}c_{m-3})}\\
&=\left((hgh)^\tau c_{m-3}\right)^{yR(h^{-1}c_{m-3})}\\
&=\left((hgh)^\tau c_{m-3}\right)^\tau h^{-1}c_{m-3}\\
&=hghc_{m-3}h^{-1}c_{m-3}\\
&=hg.
\end{align*}
Thus $z^2=1$, completing the proof.
\end{proof}

\begin{lemma}\label{lem13}
$\Aut(H)\leqslant\Alt(H)$.
\end{lemma}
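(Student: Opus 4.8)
The plan is to determine, for an arbitrary $\alpha\in\Aut(H)$ regarded as a permutation of the underlying $2^m$-element set of $H$, its sign $\mathrm{sgn}(\alpha)$, and to show it always equals $1$. Write $N=\Z(H)$. Since $H=\D_8\times\langle c_1\rangle\times\dots\times\langle c_{m-3}\rangle$, we have $N=\langle a^2,c_1,\dots,c_{m-3}\rangle$, an elementary abelian group of order $2^{m-2}\geqslant4$, and $H/N\cong\ZZ_2^2$ has order $4$. As $N$ is characteristic, every $\alpha\in\Aut(H)$ fixes $N$ setwise; put $\alpha_0=\alpha|_N\in\Aut(N)$. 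The identity I aim to prove is $\mathrm{sgn}(\alpha)=\mathrm{sgn}(\alpha_0)^4=1$, the exponent $4$ being the index of $N$ in $H$.

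First I would record two elementary facts about permutations of the set $N$. \emph{Translations:} for $1\ne w\in N$ the map $v\mapsto wv$ is a fixed-point-free involution of the $2^{m-2}$-point set $N$, hence a product of $2^{m-3}$ transpositions, hence an even permutation because $m\geqslant4$ makes $2^{m-3}$ even. \emph{Twisting:} if $\phi\in\Aut(H)$ fixes a coset $gN$ setwise, and we write $\phi(g)=gw$ with $w\in N$, then transporting $\phi|_{gN}$ to a permutation of $N$ through the bijection $gv\leftrightarrow v$ turns it into $v\mapsto w\,\phi_0(v)$, that is, $\phi_0$ followed by translation by $w$; hence $\mathrm{sgn}(\phi|_{gN})=\mathrm{sgn}(\phi_0)$ by the previous fact.

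Next I would track how $\alpha$ permutes the four cosets of $N$. The induced permutation $\bar\alpha$ fixes the coset $N$, so it is a product of cycles, say of lengths $t_1,\dots,t_s$ with $t_1+\dots+t_s=4$. Consider one such cycle $(g_1N,\dots,g_tN)$ of $\bar\alpha$; then $\alpha^t$ fixes $g_1N$ setwise, and each orbit of $\alpha^t|_{g_1N}$ of length $\ell$ inflates to an orbit of $\alpha$ of length $t\ell$ inside $g_1N\cup\dots\cup g_tN$, these accounting for all orbits of $\alpha$ there. As the lengths $\ell$ sum to $2^{m-2}$ (an even number), one computes $\mathrm{sgn}(\alpha|_{g_1N\cup\dots\cup g_tN})=(-1)^r$, where $r$ is the number of orbits of $\alpha^t|_{g_1N}$, and likewise $\mathrm{sgn}(\alpha^t|_{g_1N})=(-1)^r$; so these two signs agree, and by the twisting fact applied to $\phi=\alpha^t$ we get $\mathrm{sgn}(\alpha|_{g_1N\cup\dots\cup g_tN})=\mathrm{sgn}(\alpha_0^t)=\mathrm{sgn}(\alpha_0)^t$. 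Multiplying these contributions over all cycles of $\bar\alpha$ yields $\mathrm{sgn}(\alpha)=\mathrm{sgn}(\alpha_0)^{t_1+\dots+t_s}=\mathrm{sgn}(\alpha_0)^4=1$, whence $\alpha\in\Alt(H)$, as desired.

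I do not anticipate a genuine obstacle: once $H$ is cut into the cosets of $N=\Z(H)$, the argument is purely a matter of parity bookkeeping, and the hypothesis $m\geqslant4$ is used only through $|N|=2^{m-2}\geqslant4$, which is exactly what makes the translations in the first fact even. (For $m=3$, where $H\cong\D_8$ and $|N|=2$, the statement is false, since an outer automorphism of $\D_8$ acts on $H$ as a single $4$-cycle together with fixed points.) The two points that call for a little care are checking that the transported permutation in the twisting fact really acts as $\alpha_0$ up to a translation lying in $N$, and the orbit-inflation count in the penultimate step; both are routine.
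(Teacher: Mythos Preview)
Your proof is correct and takes a genuinely different route from the paper's. The paper proceeds by explicit decomposition: it observes that the induced automorphism on $H/\Z(H)$ lies in $\langle\varphi(x)\rangle$, writes an arbitrary $\sigma\in\Aut(H)$ as $w_1w_2w_3x^\epsilon$ with concretely described $w_1,w_2,w_3\in\ker(\varphi)$, and then verifies that each of $x,w_1,w_2,w_3$ is an even permutation by either counting fixed points of an involution (using that fixed-point sets of automorphisms are subgroups, hence have $2$-power order) or, for $w_2$, noting that its transposition count is divisible by $|\langle a,b\rangle|=8$.

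Your argument bypasses all of this explicit bookkeeping by proving the clean identity $\mathrm{sgn}(\alpha)=\mathrm{sgn}(\alpha|_N)^{[H:N]}$ via orbit-inflation through the cycles of $\bar\alpha$ on $H/N$, needing only that $|N|$ is even (so the two $(-1)^r$ computations match) and that $|N|\geqslant4$ (so translations on $N$ are even, making the twisting step go through). This is more conceptual, does not require knowing anything about how $\Aut(H)$ actually acts on $H/\Z(H)$, and generalises immediately: whenever a group $G$ has a characteristic subgroup $N$ of even index such that left multiplications by elements of $N$ are even permutations of $N$, one gets $\Aut(G)\leqslant\Alt(G)$. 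The paper's approach, by contrast, exploits the specific structure of $H$ (that elements of $a\Z(H)$ have order $4$ while those of $b\Z(H)$ and $ab\Z(H)$ are involutions) to pin down the image in $\Aut(H/\Z(H))$ before decomposing; this is more hands-on but less portable.
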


\begin{proof}
The conclusion for $m=4$ is easy to verify. Thus we assume $m\geqslant5$ in the following. Since the center $\Z(H)=\langle a^2,c_1,c_2,\dots,c_{m-3}\rangle$ is a characteristic subgroup of $H$, each automorphism $\sigma\in\Aut(H)$ induces an automorphism of $H/\Z(H)=\{\Z(H),a\Z(H),b\Z(H),ab\Z(H)\}$. More precisely, there is a homomorphism $\varphi$ from $\Aut(H)$ to $\Aut(H/\Z(H))$ such that $\varphi(\sigma)$ maps $g\Z(H)$ to $g^\sigma\Z(H)$ for all $\sigma\in\Aut(H)$ and $g\in H$.

Take an arbitrary $\sigma\in\Aut(H)$. Note that $a\Z(H)$ contains elements of order $4$ while the elements in $b\Z(H)$ and $ab\Z(H)$ are all involutions. We see that if $\varphi(\sigma)\neq1$ then $\varphi(\sigma)$ must fix the elements $\Z(H)$ and $a\Z(H)$ and swap $b\Z(H)$ and $ab\Z(H)$ in $H/\Z(H)$. Consequently, $\varphi(\sigma)\in\langle\varphi(x)\rangle$, and so $\sigma\in w\langle x\rangle$ for some $w\in\ker(\varphi)$. Since $w$ stabilizes $\Z(H)$, $a\Z(H)$ and $b\Z(H)$, we have
\[
a^w=a^{2\lambda+1}\prod_{j=1}^{m-3}c_j^{\lambda_j},\quad b^w=a^{2\mu}b\prod_{j=1}^{m-3}c_j^{\mu_j}\quad\text{and}\quad c_i^w=a^{2k_i}\prod_{j=1}^{m-3}c_j^{k_{i,j}}
\]
for each $i$ with $1\leqslant i\leqslant m-3$, where $\lambda$, $\mu$, $k_i$, $\lambda_j$ , $\mu_j$ and $k_{i,j}$ are all in $\{0,1\}$. Let $w_1$, $w_2$ and $w_3$ be automorphisms of $H$ such that
\[
\begin{array}{lll}
a^{w_1}=a^{2\lambda+1},&b^{w_1}=a^{2\mu}b,&c_i^{w_1}=a^{2k_i}c_i,\\
a^{w_2}=a,&b^{w_2}=b,&c_i^{w_2}=\prod_{j=1}^{m-3}c_j^{k_{i,j}},\\
a^{w_3}=a\prod_{j=1}^{m-3}c_j^{\lambda_j},&b^{w_3}=b\prod_{j=1}^{m-3}c_j^{\mu_j},&c_i^{w_3}=c_i.\\
\end{array}
\]
Then $w_1$ and $w_3$ are involutions, and $w=w_1w_2w_3$.

For each $\rho\in\Aut(H)$, the set of fixed points of $\rho$ is a subgroup of $H$ and thereby has size $2^\ell$ for some integer $\ell$ such that $0\leqslant\ell\leqslant m$. Thus, each involution of $\Aut(H)$ with at least four fixed points lies in $\Alt(H)$ as it is a product of $(|H|-2^\ell)/2=2^{m-1}-2^{\ell-1}$ transpositions for some integer $\ell$ such that $2\leqslant\ell\leqslant m-1$. Since $x$ and $w_3$ fix every point in $\langle a^2,c_1\rangle$ and $w_1$ fixes every point in $\langle a^{\lambda+1},a^\mu b^\lambda\rangle$, we then conclude that $x,w_3,w_1\in\Alt(H)$. Moreover, since $(gv)^{w_2}=gv^{w_2}$ for all $g\in\langle a,b\rangle$ and $v\in\langle c_1,\dots,c_{m-3}\rangle$, the number of transpositions of $w_2$ is divisible by $|\langle a,b\rangle|=8$. In particular, $w_2\in\Alt(H)$. Now $w_1$, $w_2$, $w_3$ and $x$ are all in $\Alt(H)$. It follows that $\sigma\in\Alt(H)$ due to $\sigma\in w\langle x\rangle=w_1w_2w_3\langle x\rangle$. This shows that $\Aut(H)\leqslant\Alt(H)$.
\end{proof}

The next lemma says that $x$ and $y$ as well as the elements of $R(H)$ are all even permutations on $H$. Note that this also implies $z\in\Alt(H)$ since $z\in\langle y,R(H)\rangle$.

\begin{lemma}\label{lem7}
$\langle x,y,R(H)\rangle\leqslant\Alt(H)$.
\end{lemma}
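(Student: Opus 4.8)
The plan is to show that each of the three generators $x$, $R(H)$-elements, and $y$ acts as an even permutation on the $2^m$-point set $H$; since the alternating group is normal, it then follows that the whole subgroup $\langle x,y,R(H)\rangle$ lies in $\Alt(H)$, and as noted $z\in\langle y,R(H)\rangle$ so $z$ comes along for free. First I would dispose of $x$: the previous Lemma~\ref{lem13} proves $\Aut(H)\leqslant\Alt(H)$, and $x\in\Aut(H)$ by construction, so $x\in\Alt(H)$ immediately. Likewise $\tau\in\Aut(K)$, a fact I will want later.

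Next I would handle $R(H)$. For $g\in H$ with $g\neq1$, the right translation $R(g)$ is a fixed-point-free permutation of $H$ whose cycles all have length equal to the order $o(g)$ of $g$, so $R(g)$ is a product of $2^m/o(g)$ cycles each of length $o(g)$, and its sign is $\big((-1)^{o(g)-1}\big)^{2^m/o(g)}$. If $o(g)$ is odd this is trivially $+1$; if $o(g)$ is even then $o(g)\mid 2^m$ forces $2^m/o(g)$ to be even (here one uses $m\geqslant 4$, or really just $m\geqslant 2$, together with $o(g)\leqslant 2^m$), so the sign is $+1$ again. Hence $R(g)\in\Alt(H)$ for all $g$, i.e.\ $R(H)\leqslant\Alt(H)$.

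The main work is the parity of $y$. Here I would use the explicit description of $y$: it fixes the coset decomposition $H=K\sqcup hK$ setwise when $m$ is odd (it maps $K$ to $K$ via $\tau$ and $hK$ to $hK$), and permutes within these two cosets when $m$ is even as well (sending $hk\mapsto hk^\tau c_{m-3}$, and $c_{m-3}\in K$). So $y$ restricted to $K$ is exactly $\tau$, which is an even permutation of the $2^{m-1}$-point set $K$ because $\tau\in\Aut(K)\leqslant\Alt(K)$ — and one should check this containment by the same argument as Lemma~\ref{lem13}, or note directly that $\tau$ fixes the subgroup $\langle a^2c_1\cdots, \rangle$-type subgroup of index at most $4$ so it is a product of $2^{m-2}-2^{\ell-1}$ transpositions with $\ell\geqslant m-3\geqslant 1$; more cleanly, since $\tau$ has order $2$ and fixes a subgroup of $K$ of index dividing $4$ (as its induced action on a small quotient is an involution), for $m\geqslant 4$ it fixes at least $2^{m-3}\geqslant 2$ points, in fact at least $4$ when $m\geqslant 5$, and the boundary case $m=4$ is a finite check. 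The other half, $y$ restricted to $hK\to hK$, is the map $hk\mapsto h k^\tau$ (or $hk\mapsto hk^\tau c_{m-3}$), which under the bijection $hK\to K$, $hk\mapsto k$, becomes $\tau$ (respectively $\tau$ followed by right multiplication by $c_{m-3}$, i.e.\ $\tau\cdot R_K(c_{m-3})$) on $K$; each of these is even on $K$ by the same two facts ($\tau$ even, and $R_K(c_{m-3})$ even on $2^{m-1}$ points since $c_{m-3}$ has order $2$ and $2^{m-1}/2=2^{m-2}$ is even for $m\geqslant 3$). Since $y$ is a disjoint product of an even permutation of $K$ and an even permutation of $hK$, we get $y\in\Alt(H)$.

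The step I expect to be the genuine obstacle is pinning down that $\tau$ is even on $K$ (equivalently $\Aut(K)\leqslant\Alt(K)$ or at least $\tau\in\Alt(K)$): the clean structural argument of Lemma~\ref{lem13} relied on $H$ having the nonabelian $\D_8$ factor with a distinguished center, whereas $K$ is elementary abelian (of rank $m-1$), so $\Aut(K)\cong\GL_{m-1}(2)$ is not contained in $\Alt(K)$ in general, and one must instead argue specifically about the given $\tau$. The safe route is the fixed-point count: $\tau$ is an involution, so its fixed-point subgroup $\Fix(\tau)\leqslant K$ has order $2^\ell$ and $\tau$ is a product of $2^{m-2}-2^{\ell-1}$ transpositions; this is even as soon as $\ell\geqslant 2$, i.e.\ as soon as $\tau$ fixes at least $4$ points. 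From the defining formulas one reads off that $\tau$ fixes $a^2b$ (since $(a^2b)^\tau=ba^2=a^2b$) and fixes $c_{m-3}$ when $m$ is even, and combining the action on consecutive pairs $c_{2i+1},c_{2i+2}$ one finds a fixed subspace of dimension growing with $m$; for all $m\geqslant 4$ this dimension is at least $2$ (and the single borderline value can be verified by direct computation), giving $\tau\in\Alt(K)$ and completing the argument. Everything else is the routine bookkeeping of signs of block-diagonal permutations, which I would not spell out in detail.
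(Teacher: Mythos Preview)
Your argument is correct, and for $x$ and $R(H)$ it matches the paper's (the paper phrases the latter as ``$H$ is a $2$-group and not cyclic''; your ``$o(g)\leqslant 2^m$'' should be strict inequality, which is exactly the non-cyclic hypothesis).

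For $y$ you take a genuinely different route. You split $H=K\sqcup hK$ and compute the sign on each block, which forces you to determine the parity of $\tau$ on $K$. The paper sidesteps this entirely: it introduces a correction $t$ (identity for odd $m$, right-multiplication by $c_{m-3}$ on $hK$ for even $m$, visibly even), and observes that left-translation $\sigma\colon K\to hK$, $g\mapsto hg$, conjugates $(yt)|_K$ to $(yt)|_{hK}$. The two restrictions therefore have identical cycle type, so $\mathrm{sgn}(yt)$ is a square, hence $+1$, and $y=(yt)t^{-1}\in\Alt(H)$. No analysis of $\tau$ is needed.

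Your route also works, but the obstacle you anticipate is illusory: for $m\geqslant4$ one has $K\cong\ZZ_2^{\,m-1}$ with $m-1\geqslant3$, so $\Aut(K)\cong\GL_{m-1}(2)=\PSL_{m-1}(2)$ is simple and the sign homomorphism to $\{\pm1\}$ is forced to be trivial; thus $\Aut(K)\leqslant\Alt(K)$ and $\tau\in\Alt(K)$ in one line. (Your worry is legitimate only at $m=3$, where $\GL_2(2)\cong\Sy_3$.) Your fixed-point count is therefore a correct but unnecessary detour. What the paper's conjugacy trick buys is independence from the specific definition of $\tau$: the two halves of $y$ are conjugate by construction, so their parities agree automatically.
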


\begin{proof}
Lemma~\ref{lem13} already indicates $x\in\Alt(H)$. Let $\sigma$ be the map from $K$ to $hK$ sending $g$ to $hg$ for all $g\in K$, and $t$ be the permutation on $H$ such that $g^t=g$ and $(hg)^t=hgc_{m-3}^{m-1}$ for all $g\in K$. Then $t$ is the identity permutation if $m$ is odd, and is a product of $|K|/2$ transpositions if $m$ is even. In particular, $t\in\Alt(H)$. From the definition of $y$ one sees that the following diagram commutes.
\[
\xymatrix{
K\ar[r]^{yt}\ar[d]^{\sigma}&K\ar[d]^{\sigma}\\
hK\ar[r]^{yt}&hK
}
\]
Hence $(yt)|_{hK}$ has the same cycle structure as $(yt)|_K$, and so $yt\in\Alt(H)$. This in turn gives $y\in\Alt(H)$. Finally, as $H$ is a $2$-group and not cyclic, we have $R(H)\leqslant\Alt(H)$. Consequently, $\langle x,y,R(H)\rangle\leqslant\Alt(H)$.
\end{proof}

Recall the standard construction of the \emph{coset graph} $\Cos(G,H,HSH)$ given a group $G$ with a subgroup $H$ and a subset $S$ such that $S\cap H=\emptyset$ and $HSH$ is inverse-closed. Such a graph has vertex set $[G{:}H]$, the set of right cosets of $H$ in $G$, and edge set $\{\{Hg,Hsg\}\mid g\in G,\ s\in HSH\}$. It is easy to see that $\Cos(G,H,HSH)$ has valency $|HSH|/|H|$, and $G$ acts by right multiplication on $[G{:}H]$ as a group of automorphisms of $\Cos(G,H,HSH)$. Moreover, $\Cos(G,H,HSH)$ is connected if and only if $\langle S,H\rangle=G$.

Now we are in the position to construct the graph $\Ga_m$ for Theorem~\ref{thm1}.

\begin{construction}
For each integer $m\geqslant4$, let
\[
\Ga_m=\Cos(\Alt(H),R(H),R(H)\{x,y\}R(H))
\]
with $H$, $x$ and $y$ defined at the beginning of this section.
\end{construction}

\section{Connectivity of $\Ga_m$}

The aim of this section it to prove that $\Ga_m$ is connected. According to the construction of $\Ga_m$, it suffices to prove $\langle x,y,R(H)\rangle=\Alt(H)$, and we will achieve this by dealing with the cases $m$ is odd and $m$ is even separately. For a group $G$, denote the set $G\setminus\{1\}$ by $G^*$. For a permutation $\sigma$ of a set $\Omega$ and $\alpha,\beta\in\Omega$, we write $\alpha\xrightarrow\sigma\beta$ if $\alpha^\sigma=\beta$.

\subsection{Technical lemms}

We first establish two technical lemmas that will be needed later in this section.

\begin{lemma}\label{lem3}
Let $\ell\geqslant2$ be an even integer, and $V=\langle e_1\rangle\times\langle e_2\rangle\times\dots\times\langle e_\ell\rangle$ be a group with involutions $e_1,e_2,\dots,e_\ell$. Denote the right regular representation of $V$ by $r$. Let $\omega=r(e_{\ell-1}e_\ell)$, $e_{-1}=e_0=1$, and $\chi$ and $\psi$ be automorphisms of $V$ such that $e_{2i+1}^\chi=e_{2i+1}$, $e_{2i+2}^\chi=e_{2i+1}e_{2i+2}$, $e_{2i+1}^\psi=e_{2i-1}e_{2i}e_{2i+2}$ and $e_{2i+2}^\psi=e_{2i-1}e_{2i}e_{2i+1}$ for each $i$ with $0\leqslant i\leqslant(\ell-2)/2$. Then $\langle\chi,\psi,\omega\rangle$ is a transitive subgroup of $\Sym(V)$.
\end{lemma}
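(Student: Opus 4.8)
The plan is to realise $\langle\chi,\psi,\omega\rangle$ inside the affine group $\AGL(V)=V\rtimes\Aut(V)$ and then reduce transitivity to a short computation with subgroups of $V$. Regard $V$ as an $\mathbb{F}_2$-space with basis $e_1,\dots,e_\ell$. Then $\chi,\psi$ are $\mathbb{F}_2$-linear, while $\omega=r(e_{\ell-1}e_\ell)$ is the translation by the vector $v_0:=e_{\ell-1}e_\ell$. Hence $G:=\langle\chi,\psi,\omega\rangle\leqslant\AGL(V)$, the image of $G$ under the projection $\AGL(V)\to\Aut(V)$ is $L:=\langle\chi,\psi\rangle$, and $\omega$ lies in the translation subgroup $N:=G\cap V$.

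First I would reduce to the claim that the smallest $L$-invariant subgroup $M$ of $V$ containing $v_0$ equals $V$. Since $N$ is normal in $G$, and conjugation in $\AGL(V)$ by an element with linear part $\gamma$ sends the translation by $v$ to the translation by $v^\gamma$, the subgroup $N$ is $L$-invariant; as $v_0\in N$ this gives $M\leqslant N$. On the other hand $0^N=N$ and $N\subseteq 0^G\subseteq V$, so once $M=V$ we get $0^G=V$, i.e.\ $G$ is transitive on $V$.

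Next I would compute $M$ in a basis adapted to $\chi$ and $\psi$. Put $f_j=e_{2j-1}e_{2j}$ and $g_j=e_{2j}$ for $1\leqslant j\leqslant\ell/2$, so that $\{f_1,g_1,\dots,f_{\ell/2},g_{\ell/2}\}$ is a basis of $V$ and $f_0:=e_{-1}e_0=1$. A routine check from the definitions of $\chi$ and $\psi$ gives $f_j^{\,\chi}=g_j$ and $g_j^{\,\psi}=g_jf_jf_{j-1}$ for every $j$. A downward induction starting from $v_0=f_{\ell/2}\in M$ then finishes the job: whenever $f_j\in M$ we obtain $g_j=f_j^{\,\chi}\in M$, and hence $f_{j-1}=g_j^{\,\psi}g_jf_j\in M$. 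Therefore all $f_j$ and $g_j$ lie in $M$, so $M=V$.

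I expect the two non-mechanical points to be the embedding $G\leqslant\AGL(V)$ together with the reduction in the second paragraph, and the choice of the basis $\{f_j,g_j\}$, in which $\chi$ and $\psi$ act triangularly enough to drive the induction; the identities $f_j^{\,\chi}=g_j$ and $g_j^{\,\psi}=g_jf_jf_{j-1}$ and the orbit bookkeeping are then straightforward.
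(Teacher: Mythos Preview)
Your proof is correct and follows essentially the same strategy as the paper: both arguments show that the translation subgroup $r(V)$ lies in $\langle\chi,\psi,\omega\rangle$ by a downward induction that alternates applications of $\chi$ and $\psi$ via the conjugation identity $r(v^\gamma)=\gamma^{-1}r(v)\gamma$. Your framing through $\AGL(V)$ and the $L$-submodule $M$ generated by $v_0$, together with the basis $f_j=e_{2j-1}e_{2j}$, $g_j=e_{2j}$, is a slightly more conceptual packaging, while the paper works directly with the $r(e_i)$; but the inductive step (your $f_j\mapsto g_j\mapsto f_{j-1}$ versus the paper's $r(e_{\ell-2i+1}),r(e_{\ell-2i+2})\Rightarrow r(e_{\ell-2i-1}e_{\ell-2i})\Rightarrow r(e_{\ell-2i}),r(e_{\ell-2i-1})$) is the same computation in different notation.
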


\begin{proof}
Note that, viewing $V$ as a vector space over $\mathbb{F}_2$, the vectors $e_1^\chi,e_2^\chi,\dots,e_\ell^\chi$ form a basis of $V$. Thus the automorphism $\chi$ of $V$ is well-defined. Similarly, $\psi$ is well-defined. Write $N=\langle\chi,\psi,\omega\rangle$. Since $\chi$ is an automorphism of $V$, we have
\[
r(e_\ell)=r((e_{\ell-1}e_\ell)^\chi)=\chi^{-1}r(e_{\ell-1}e_\ell)\chi=\chi^{-1}\omega\chi\in N
\]
and so $r(e_{\ell-1})=r(e_{\ell-1}e_\ell)r(e_\ell)=\omega r(e_\ell)\in N$. Suppose there exists a nonnegative integer $i\leqslant(\ell-2)/2$ such that $r(e_{\ell-2i+1}),r(e_{\ell-2i+2}),\dots,r(e_{\ell-1}),r(e_\ell)$ are all in $N$. Note that
\begin{align*}
r(e_{\ell-2i-1}e_{\ell-2i})&=r(e_{\ell-2i-1}e_{\ell-2i}e_{\ell-2i+1})r(e_{\ell-2i+1})\\
&=r(e_{\ell-2i+2}^\psi)r(e_{\ell-2i+1})=\psi^{-1}r(e_{\ell-2i+2})\psi r(e_{\ell-2i+1})
\end{align*}
since $\psi$ is an automorphism of $V$. We thereby deduce that $r(e_{\ell-2i-1}e_{\ell-2i})\in N$. It follows that  $r(e_{\ell-2i})=r((e_{\ell-2i-1}e_{\ell-2i})^\chi)=\chi^{-1}r(e_{\ell-2i-1}e_{\ell-2i})\chi\in N$ and thus $r(e_{\ell-2i-1})=r(e_{\ell-2i-1}e_{\ell-2i})r(e_{\ell-2i})\in N$. Then by induction one concludes that $r(e_1),r(e_2),\dots,r(e_{\ell-1}),r(e_\ell)$ are all in $N$. Consequently, $r(V)\leqslant N$ and so $N$ is transitive on $V$.
\end{proof}

The following lemma is a consequence of the classification of doubly transitive permutation groups (see for example~\cite{Cameron1999}).

\begin{lemma}\label{lem2}
Suppose that $G$ is a doubly transitive permutation group on $2^m$ points. Then one of the following holds:
\begin{itemize}
\item[(i)] $G\leqslant\AGL_m(2)$;
\item[(ii)] $2^m-1=q$ for some prime power $q$ and $\PSL_2(q)\leqslant G\leqslant\PGaL_2(q)$;
\item[(iii)] $\A_{2^m}\leqslant G\leqslant\Sy_{2^m}$.
\end{itemize}
\end{lemma}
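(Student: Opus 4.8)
The plan is to deduce this directly from the classification of finite doubly transitive permutation groups. By Burnside's dichotomy together with the classification of finite simple groups, such a group $G$ is either of \emph{affine type}, possessing an elementary abelian regular normal subgroup whose order equals the degree, or of \emph{almost simple type}, with socle a nonabelian simple group acting primitively. If $G$ is of affine type on $2^m$ points, then its socle is an elementary abelian $2$-group of order $2^m$, which we identify with $\mathbb{F}_2^m$, and then $G$ embeds into $\AGL_m(2)$, giving~(i). So it remains to treat the almost simple case, which I would handle by running through the explicit list of doubly transitive almost simple groups, organized by socle, and recording in each case whether the degree can be the prescribed power of $2$.

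Most entries fall to an immediate parity consideration. The groups with socle $\PSU_3(q)$, $\Sz(q)$ or ${}^2\mathrm{G}_2(q)$ act on $q^3+1$, $q^2+1$ and $q^3+1$ points respectively; here $q^2+1$ is odd, while $q^3+1=(q+1)(q^2-q+1)$ with $q^2-q+1$ odd and greater than $1$, so none of these is a power of $2$. The symplectic groups $\PSp_{2d}(2)$ act doubly transitively on $2^{d-1}(2^d\pm1)$ points, whose odd part $2^d\pm1$ exceeds $1$, so these are excluded too. The sporadic examples (including the exceptional actions of $\PSL_2(7)$ and $\PSL_2(11)$, and the non-natural actions of small alternating groups) have degrees $6$, $7$, $10$, $11$, $12$, $15$, $22$, $23$, $24$, $176$, $276$, none of which is a power of $2$. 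For the linear groups $\PSL_d(q)$ with $d\geqslant3$, acting on $(q^d-1)/(q-1)=1+q+\dots+q^{d-1}$ points, one shows this number is never a power of $2$: it is odd whenever $q$ is even or $d$ is odd, while for $q$ odd and $d$ even the exact power of $2$ dividing it is $2^{\,a+b-1}$, where $2^a$ and $2^b$ are the largest powers of $2$ dividing $q+1$ and $d$ respectively, and this is far smaller than the trivial lower bound $q^{d-1}$ for the degree itself, a contradiction. What survives are exactly two possibilities. A group with socle $\PSL_2(q)$ acting doubly transitively of $2$-power degree must be acting on the projective line, so the degree is $q+1=2^m$; hence $q=2^m-1$ is a prime power and $\PSL_2(q)\leqslant G\leqslant\PGaL_2(q)$, which is~(ii). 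And a group with socle an alternating group in its natural action has degree $n$, so $n=2^m$ and $\A_{2^m}\leqslant G\leqslant\Sy_{2^m}$, which is~(iii).

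I expect the only step requiring more than a one-line parity check to be the exclusion of $\PSL_d(q)$ with $d\geqslant3$; the required estimate on the $2$-adic valuation of $(q^d-1)/(q-1)$ is elementary but should be written out carefully, and one must also remember to note that the exceptional doubly transitive actions of $\PSL_2(7)$, $\PSL_2(11)$ and of $\A_5,\dots,\A_8$ have non-$2$-power degree, so that they contribute nothing new. Everything else is read off directly from the tables of doubly transitive groups, so the argument is short once the classification is invoked.
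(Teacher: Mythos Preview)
Your proposal is correct and follows exactly the route the paper indicates: the paper does not give a proof of this lemma at all, stating only that it ``is a consequence of the classification of doubly transitive permutation groups'' with a reference to Cameron's book. What you have written is precisely the case-by-case verification one performs from that classification, so there is nothing to compare beyond noting that you have made explicit what the paper leaves to the citation.
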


\begin{remark}
In fact, the prime power $q$ in case~(ii) of Lemma~\ref{lem2} is necessarily a prime by Mih\v{a}ilescu's theorem~\cite{Mihailescu2004}. In particular, $m$ must be odd in case~(ii) of Lemma~\ref{lem2}.
\end{remark}

\subsection{Odd $m$}

Throughout this subsection, let $m$ be odd, and
\[
U=\left\{\prod_{i=1}^{m-3}c_i^{k_i}\mid\sum_{j=1}^{(m-3)/2}k_{2j}\equiv0\pmod{2}\right\}.
\]
Note that $\{U,Uc_{m-3}\}$ forms a partition of $\langle c_1,c_2,\dots,c_{m-3}\rangle$, and $x$ stabilizes $U$ setwise. For each $u\in U$ we have
\begin{equation}\label{eq1}
u\xrightarrow{x}u^x\xrightarrow{y}u^{xy}\xrightarrow{z}u^x\xrightarrow{x}u\xrightarrow{y}u^y\xrightarrow{z}u,
\end{equation}
\begin{equation}\label{eq2}
au\xrightarrow{x}a^{-1}u^x\xrightarrow{y}abu^{xy}c_{m-4}c_{m-3}\xrightarrow{z}au^x
\xrightarrow{x}a^{-1}u\xrightarrow{y}abu^yc_{m-4}c_{m-3}\xrightarrow{z}au,
\end{equation}
\begin{align}\label{eq3}
&uc_{m-3}\xrightarrow{x}a^2u^xc_{m-4}c_{m-3}\xrightarrow{y}bu^{xy}c_{m-4}c_{m-3}\xrightarrow{z}bu^xc_{m-4}c_{m-3}\\
\nonumber
&bu^xc_{m-4}c_{m-3}\xrightarrow{x}a^{-1}buc_{m-3}\xrightarrow{y}a^{-1}bu^yc_{m-6}c_{m-5}c_{m-3}\xrightarrow{z}a^{-1}buc_{m-3}\\
\nonumber
&a^{-1}buc_{m-3}\xrightarrow{x}bu^xc_{m-4}c_{m-3}\xrightarrow{y}a^2u^{xy}c_{m-4}c_{m-3}\xrightarrow{z}a^2bu^xc_{m-4}c_{m-3}\\
\nonumber
&a^2bu^xc_{m-4}c_{m-3}\xrightarrow{x}abuc_{m-3}\xrightarrow{y}a^{-1}u^yc_{m-6}c_{m-5}c_{m-4}\xrightarrow{z}a^{-1}uc_{m-3}\\
\nonumber
&a^{-1}uc_{m-3}\xrightarrow{x}a^{-1}u^xc_{m-4}c_{m-3}\xrightarrow{y}abu^{xy}\xrightarrow{z}au^xc_{m-4}c_{m-3}\\
\nonumber
&au^xc_{m-4}c_{m-3}\xrightarrow{x}auc_{m-3}\xrightarrow{y}au^yc_{m-6}c_{m-5}c_{m-3}\xrightarrow{z}abuc_{m-3}\\
\nonumber
&abuc_{m-3}\xrightarrow{x}a^2bu^xc_{m-4}c_{m-3}\xrightarrow{y}a^2bu^{xy}c_{m-4}c_{m-3}\xrightarrow{z}a^2u^xc_{m-4}c_{m-3}\\
\nonumber
&a^2u^xc_{m-4}c_{m-3}\xrightarrow{x}uc_{m-3}\xrightarrow{y}u^yc_{m-6}c_{m-5}c_{m-4}\xrightarrow{z}uc_{m-3},
\end{align}
and
\begin{align}\label{eq4}
&a^2u\xrightarrow{x}a^2u^x\xrightarrow{y}bu^{xy}\xrightarrow{z}bu^x
\xrightarrow{x}abu\xrightarrow{y}a^{-1}u^yc_{m-4}c_{m-3}\xrightarrow{z}a^{-1}u\\\nonumber
&a^{-1}u\xrightarrow{x}au^x\xrightarrow{y}au^{xy}c_{m-4}c_{m-3}\xrightarrow{z}abu^x
\xrightarrow{x}bu\xrightarrow{y}a^2u^y\xrightarrow{z}a^2bu\\\nonumber
&a^2bu\xrightarrow{x}a^{-1}bu^x\xrightarrow{y}a^{-1}bu^{xy}c_{m-4}c_{m-3}\xrightarrow{z}a^{-1}bu^x
\xrightarrow{x}a^2bu\xrightarrow{y}a^2bu^y\xrightarrow{z}a^2u.
\end{align}

\begin{lemma}\label{lem6}
Suppose $m$ is odd. Then the permutation $(xyz)^8$ of $H$ has cycle decomposition
\[
(xyz)^8=\left(\prod_{u\in U}(a^2u,a^{-1}u,a^2bu)\right)\left(\prod_{u\in U}(bu,abu,a^{-1}bu)\right).
\]
\end{lemma}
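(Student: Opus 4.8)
The plan is to track the orbits of the group element $xyz$ on $H$ by exploiting the chains of arrows already recorded in equations~\eqref{eq1}--\eqref{eq4}. Each of these displays follows a single element $g\in H$ through six successive applications of $x$, $y$, $z$ (in the pattern $x,y,z,x,y,z$), so it computes $g^{(xyz)^2}$; reading off the endpoint of each chain and comparing it with the starting point tells us exactly how $(xyz)^2$ permutes the relevant coset. The first thing I would do is organize $H$ into the six "layers" $U$, $aU$, $Uc_{m-3}$, $a^{-1}U\cup\dots$ etc.\ that appear on the left-hand sides of \eqref{eq1}--\eqref{eq4}, check that these layers partition $H$ (using that $\{U,Uc_{m-3}\}$ partitions $\langle c_1,\dots,c_{m-3}\rangle$ and that $\langle a,b\rangle$ has the eight cosets $1,a,a^2,a^{-1},b,ab,a^2b,a^{-1}b$ relative to $\langle a^2,b\rangle$... more precisely relative to the appropriate subgroup), and confirm that $xyz$ maps each layer into the union of the layers covered by the corresponding display. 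Then for a fixed $u\in U$ I would extract from \eqref{eq1}, \eqref{eq2}, \eqref{eq4} the action of $(xyz)^2$ on the six points $u,u^y$-type images, and from \eqref{eq3} its action on the $Uc_{m-3}$-layer, writing each as a permutation in cycle form.

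The key computation is to assemble $(xyz)^8=\bigl((xyz)^2\bigr)^4$. From \eqref{eq1} one sees $u\xrightarrow{(xyz)^2}u$ and $u^x\xrightarrow{(xyz)^2}u^x$ — more carefully, the chain shows the pair $\{u,u^x\}$ (or rather $u$ and $u^x$) is returned to itself, so on the "$U$-part" (elements of $\langle c_1,\dots,c_{m-3}\rangle$ lying in $U$ together with their $x$-images) the permutation $(xyz)^2$ has small order, and raising to the $4$th power kills it, contributing nothing to $(xyz)^8$. The interesting behaviour is on the $\langle a^2,b\rangle u$-coset: equations~\eqref{eq2}, \eqref{eq3}, \eqref{eq4} should reveal that $(xyz)^2$ permutes the twelve points $\{a^2u,a^{-1}u,a^2bu,bu,abu,a^{-1}bu\}$ (six of them, really, paired appropriately) in two $3$-cycles composed with something, in such a way that the square of $(xyz)^2$, i.e.\ $(xyz)^4$, already has order dividing something, and $(xyz)^8$ lands exactly on the stated pair of $3$-cycles $(a^2u,a^{-1}u,a^2bu)(bu,abu,a^{-1}bu)$. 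I would verify this by direct bookkeeping: from \eqref{eq4}, reading the endpoints, $(xyz)^2$ sends $a^2u\mapsto a^{-1}u\mapsto a^2bu\mapsto a^2u$ up to the $u\mapsto u^y$ twist, and the point is that after eight steps all the $c_i$-twists ($u\mapsto u^y$, the stray factors $c_{m-4}c_{m-3}$, etc.) cancel because $\psi=\tau|$ restricted appropriately has order $2$ and the parity bookkeeping closes up, leaving only the clean $3$-cycle on the $\langle a\rangle$-part.

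Concretely, the steps in order are: (1) confirm the six displayed layers partition $H$ and that $x,y,z$ respect the layer structure as the arrows indicate; (2) from \eqref{eq1} deduce that $(xyz)^2$, hence $(xyz)^8$, fixes every point of the form $u$ and $u^x$ for $u\in U$ — i.e.\ is trivial on $\langle c_1,\dots,c_{m-3}\rangle$-type points and on the $a$-coset points $au,a^{-1}u$ governed by \eqref{eq2}, wait — rather, I would read \eqref{eq2} to see $au\xrightarrow{(xyz)^2}au$ so those are fixed too; (3) from \eqref{eq3} check that $(xyz)^2$ on the $Uc_{m-3}$-layer, a chain of length $24$ returning $uc_{m-3}$ to $uc_{m-3}$, has cycles whose lengths all divide $8$ in a way that makes $(xyz)^8$ act there... actually I expect \eqref{eq3} shows the $Uc_{m-3}$-orbit has size dividing $8$, so $(xyz)^8$ is trivial on it; (4) from \eqref{eq4}, the decisive display, read off that $(xyz)^2$ cyclically permutes $a^2u\to a^{-1}u\to \dots$ together with the $b$-variants, and compute the $4$th power to get precisely the two $3$-cycles claimed; (5) collect the contributions over all $u\in U$ and observe everything outside these $3$-cycles is fixed, giving the stated formula. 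The main obstacle will be step~(4): correctly propagating the automorphism twists and the parasitic $c_{m-4}c_{m-3}$, $c_{m-6}c_{m-5}c_{m-3}$ factors through eight iterations and checking they cancel, which requires care with how $\tau$ (equivalently $\psi$) acts on the $c_i$ and the fact that $\psi^2=1$ together with the parity constraint defining $U$; the layer partition in step~(1) and the "returns home" claims from \eqref{eq1}--\eqref{eq3} are comparatively routine.
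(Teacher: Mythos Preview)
Your proposal is correct and follows essentially the same route as the paper: partition $H$ using the $\langle a,b\rangle$-cosets and the $U/Uc_{m-3}$ split, then read off from \eqref{eq1}--\eqref{eq2} that $(xyz)^2$ fixes $U\cup aU$ pointwise, from \eqref{eq3} that the $c_{m-3}$-layer sits in $8$-cycles of $xyz$ (so $(xyz)^8$ is trivial there), and from \eqref{eq4} that $xyz$ has $6$-cycles $(a^2u,bu^x,a^{-1}u,abu^x,a^2bu,a^{-1}bu^x)$ whose fourth power under $(xyz)^2$ gives the stated $3$-cycles. One reassurance: the ``main obstacle'' you anticipate in step~(4)---tracking the $u\mapsto u^y$ twists and stray $c_{m-4}c_{m-3}$ factors through eight iterations---is not actually there, because \eqref{eq4} already records that after each full pass of $(xyz)^2$ the endpoint is back in $\langle a,b\rangle u$ with the \emph{same} $u$ (all twists cancel within a single $(xyz)^2$ block); the only genuine bookkeeping is the reindexing $u\mapsto u^x$ for the second family of $3$-cycles, which the paper handles by noting $x|_U$ is a bijection of $U$.
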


\begin{proof}
Denote $H_{i,j,k}=\{a^ib^juc_{m-3}^k\mid u\in U\}$, where $i\in\{-1,0,1,2\}$ and $j,k\in\{0,1\}$. Then $\{H_{i,j,k}\mid-1\leqslant i\leqslant2,\ 0\leqslant j\leqslant1,\ 0\leqslant k\leqslant1\}$ forms a partition of $H$.

By~\eqref{eq1}~and~\eqref{eq2}, $u^{(xyz)^2}=u$ and $(au)^{(xyz)^2}=au$ for all $u\in U$. Hence $(xyz)^8$ fixes $H_{0,0,0}\cup H_{1,0,0}$ pointwise. From~\eqref{eq3} one sees that $(xyz)^8$ fixes $uc_{m-3}$, $bu^xc_{m-4}c_{m-3}$, $a^{-1}buc_{m-3}$, $a^2bu^xc_{m-4}c_{m-3}$, $a^{-1}uc_{m-3}$, $au^xc_{m-4}c_{m-3}$, $abuc_{m-3}$ and $a^2u^xc_{m-4}c_{m-3}$ for all $u\in U$. Noting $\{u^xc_{m-4}c_{m-3}\mid u\in U\}=Uc_{m-3}$, we conclude that $(xyz)^8$ fixes
\[
H_{0,0,1}\cup H_{0,1,1}\cup H_{-1,1,1}\cup H_{2,1,1}\cup H_{-1,0,1}\cup H_{1,0,1}\cup H_{1,1,1}\cup H_{2,0,1}
\]
pointwise. This together with the conclusion that $(xyz)^8$ fixes $H_{0,0,0}\cup H_{1,0,0}$ pointwise shows it suffices to prove that $(xyz)^8$ induces the permutation
\[
\left(\prod_{u\in U}(a^2u,a^{-1}u,a^2bu)\right)\left(\prod_{u\in U}(bu,abu,a^{-1}bu)\right)
\]
on $H_{2,0,0}\cup H_{-1,0,0}\cup H_{2,1,0}\cup H_{0,1,0}\cup H_{1,1,0}\cup H_{-1,1,0}$. Indeed, \eqref{eq4} implies
\[
a^2u\xrightarrow{(xyz)^8}a^{-1}u\xrightarrow{(xyz)^8}a^2bu\xrightarrow{(xyz)^8}a^2u
\]
and
\[
bu^x\xrightarrow{(xyz)^8}abu^x\xrightarrow{(xyz)^8}a^{-1}bu^x\xrightarrow{(xyz)^8}bu^x
\]
for all $u\in U$. This yields the desired conclusion since $x|_U\in\Aut(U)$.
\end{proof}

Before proving the next lemma, we note that for each $u\in U$,

\begin{equation}\label{eq5}
a^{-1}bu\xrightarrow{xyzx}a^2u\xrightarrow{xyzx}abu\xrightarrow{yz}a^{-1}u
\xrightarrow{xyzx}bu\xrightarrow{yz}a^2bu\xrightarrow{zyxzyxyz}au
\end{equation}
and
\begin{align}\label{eq6}
&a^{-1}uc_{m-3}\xrightarrow{yz}auc_{m-3}\xrightarrow{xyzx}a^2buc_{m-3}\xrightarrow{yz}a^2uc_{m-3}\xrightarrow{yz}buc_{m-3}\\\nonumber
&auc_{m-3}\xrightarrow{yz}abuc_{m-3}\xrightarrow{xyzx}uc_{m-3}\xrightarrow{xyzx}a^{-1}buc_{m-3}.
\end{align}

\begin{lemma}\label{lem4}
Suppose $m$ is odd. Then for each $g\in H\setminus U$, there exists $\zeta\in\langle x,y,z\rangle$ such that $g^\zeta=c_{m-3}$.
\end{lemma}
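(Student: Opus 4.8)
The plan is to show that the $\langle x,y,z\rangle$-orbit of $c_{m-3}$ contains every element of $H\setminus U$, equivalently that all such elements are equivalent to $c_{m-3}$ under the group generated by our three involutions. The set $H\setminus U$ breaks into the cosets $H_{i,j,k}$ (with $i\in\{-1,0,1,2\}$, $j,k\in\{0,1\}$) used in the proof of Lemma~\ref{lem6}, minus the part $H_{0,0,0}\cup H_{1,0,0}$ sitting inside $U$; in particular the ``$k=0$ but $(i,j)\neq(0,0),(1,0)$'' cosets and all the ``$k=1$'' cosets must be reached. My strategy is to reduce everything, in two stages, first to a single element $c_{m-3}\in Uc_{m-3}$ and then to handle the parity class: the explicit arc-sequences \eqref{eq5} and \eqref{eq6} already display how $\langle x,y,z\rangle$ acts within the ``$u$'' and ``$uc_{m-3}$'' strata for each fixed $u\in U$.

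Concretely, I would proceed as follows. First, using \eqref{eq5}, observe that for every $u\in U$ the six cosets' representatives $a^2u,\,a^{-1}u,\,abu,\,bu,\,a^2bu,\,au$ all lie in one $\langle x,y,z\rangle$-orbit; hence each of $H_{2,0,0},H_{-1,0,0},H_{0,1,0},H_{-1,1,0},H_{2,1,0},H_{1,1,0}$ is absorbed into a common orbit with $au$ (note $au\in H_{1,0,0}\subseteq U$). Second, using \eqref{eq6}, the analogous statement holds in the $c_{m-3}$-layer: the representatives $a^{-1}uc_{m-3},\,auc_{m-3},\,a^2buc_{m-3},\,a^2uc_{m-3},\,buc_{m-3},\,abuc_{m-3},\,uc_{m-3},\,a^{-1}buc_{m-3}$ — that is, all eight cosets $H_{i,j,1}$ restricted to a fixed $u$ — lie in one orbit, in particular in the orbit of $uc_{m-3}$. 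Third, I must connect different values of $u$ and cross between the two layers. For this I would use \eqref{eq1}--\eqref{eq4}: those cycles move between $u$ and $au$, between $uc_{m-3}$ and various $a^ib^ju'c_{m-3}$, and crucially they change the $U$-part by things like multiplication by $c_{m-4}c_{m-3}$ or applying $x|_U$, so composing them lets one walk through all of $U$ and flip the $c_{m-3}$-coordinate. The upshot should be that \emph{every} element of $H\setminus U$ lies in the single $\langle x,y,z\rangle$-orbit containing $c_{m-3}$.

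The bookkeeping is the real work: one has to verify that the walks assembled from \eqref{eq1}--\eqref{eq6} actually reach $c_{m-3}$ itself (i.e. the representative with $u=1$, $i=j=0$, $k=1$) starting from an arbitrary $g\in H\setminus U$, rather than merely showing $H\setminus U$ is a union of orbits. I expect the cleanest route is: given $g$, first apply \eqref{eq5} or \eqref{eq6} to normalise the ``$\langle a,b\rangle$-part'' of $g$ to either $a$ (if $g\notin Uc_{m-3}$, landing on some $au\in U$, then reflect back via an element of $\langle x,y,z\rangle$ taking $au$ to the coset $Uc_{m-3}$ using \eqref{eq2}'s intertwining of the $au$-stratum with the $uc_{m-3}$-stratum) or to $1$ with a $c_{m-3}$ factor (if already in $Uc_{m-3}$); then repeatedly apply the $U$-moves coming from \eqref{eq3} and \eqref{eq4} — which among them realise multiplication by $c_{m-4}c_{m-3}$, by $c_{m-6}c_{m-5}$, and the automorphism $x|_U$ — to kill the $c_i$'s one index-pair at a time and arrive at $c_{m-3}$. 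The main obstacle is precisely checking that this last generating set of moves on $U$ suffices to send any $u\in U$ to $1$ while keeping track of the parasitic $\langle a,b\rangle$-factors and $c_{m-3}$-parity flips that each move introduces; this is a finite but delicate combinatorial verification, closely parallel in spirit to the transitivity argument in Lemma~\ref{lem3}.
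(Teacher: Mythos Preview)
Your overall strategy matches the paper's: use \eqref{eq5} and \eqref{eq6} to normalise the $\langle a,b\rangle$-part within a fixed coset, and then move through the $\langle c_1,\dots,c_{m-3}\rangle$-part. However, there is a genuine gap, plus one small error.

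The small error first: $H_{1,0,0}=aU$ is \emph{not} contained in $U$, since $U\subseteq\langle c_1,\dots,c_{m-3}\rangle$. Only $H_{0,0,0}=U$ lies inside $U$; the coset $aU$ is part of $H\setminus U$ and must be reached (and indeed \eqref{eq5} handles it). This does not break your argument, but it shows some confusion about what $U$ is.

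The real gap is in your third step. You assert that the moves extracted from \eqref{eq3}--\eqref{eq4} ``realise multiplication by $c_{m-4}c_{m-3}$, by $c_{m-6}c_{m-5}$, and the automorphism $x|_U$'' and that composing these lets one walk through all of $U$. But these three moves alone do not act transitively on the $c$-part: right multiplications by $c_{m-4}c_{m-3}$ and $c_{m-6}c_{m-5}$ touch only the last four coordinates, and $x|_U$ only shears each $c_{2i+2}$ by $c_{2i+1}$; there is no way to alter $c_1$ with this list. What you are missing is precisely the $\tau$-action of $y$ on the $c$-part, which shifts index pairs: $c_{2i+1}\mapsto c_{2i-1}c_{2i}c_{2i+2}$, $c_{2i+2}\mapsto c_{2i-1}c_{2i}c_{2i+1}$.

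The paper closes this gap not by running a ``parallel'' argument, but by \emph{directly invoking} Lemma~\ref{lem3}: with $e_i=c_i$ and $\ell=m-3$, the group $\langle\chi,\psi,\omega\rangle$ is transitive on $\langle c_1,\dots,c_{m-3}\rangle$, so there is a word $\eta_1\cdots\eta_t$ in $\{\chi,\psi,\psi\omega\}$ sending $v$ (the $c$-part of $g$) to $c_{m-3}$. Each abstract move $\eta_i$ is then simulated by a concrete $\zeta_i\in\langle x,y,z\rangle$: the move $\chi$ is realised by $x$; the move $\psi$ is realised by first using \eqref{eq5}/\eqref{eq6} to normalise the current element to $a^2v'$ and then applying $y$ (so that $(a^2v')^y=b(v')^\psi$); the move $\psi\omega$ is realised by first normalising to $av'$ and then applying $y$ (which produces the extra $c_{m-4}c_{m-3}$ factor since $av'\in hK$). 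An easy induction then keeps $g^{\zeta_0\cdots\zeta_i}$ inside $\langle a,b\rangle v^{\eta_0\cdots\eta_i}\setminus U$, and a final application of \eqref{eq6} lands on $c_{m-3}$. Once you supply this simulation of $\chi,\psi,\psi\omega$ and appeal to Lemma~\ref{lem3}, your sketch becomes a complete proof.
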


\begin{proof}
Let $v\in\langle c_1,c_2,\dots,c_{m-3}\rangle$ such that $g\in\langle a,b\rangle v$, and define $\chi$, $\psi$ and $\omega$ as in Lemma~\ref{lem3} with $\ell=m-3$ and $e_i=c_i$. Then $\chi$, $\psi$ and $\psi\omega$ are all involutions in $\Sym(\langle c_1,c_2,\dots,c_{m-3}\rangle)$, and by Lemma~\ref{lem3}, there exist $\eta_1,\eta_2,\dots,\eta_t\in\{\chi,\psi,\psi\omega\}$ such that $v^{\eta_1\eta_2\cdots\eta_t}=c_{m-3}$. Let $\eta_0=1\in\Sym(\langle c_1,c_2,\dots,c_{m-3}\rangle)$ and $\zeta_0=1\in\langle x,y,z\rangle$. Obviously, $g^{\zeta_0}\in\langle a,b\rangle v^{\eta_0}\setminus U$. We shall prove by induction that there exist $\zeta_0,\zeta_1,\dots,\zeta_t\in\langle x,y,z\rangle$ with
\[
g^{\zeta_0\zeta_1\dots\zeta_t}\in\langle a,b\rangle v^{\eta_0\eta_1\dots\eta_t}\setminus U.
\]
By~\eqref{eq5}, for each $u\in U$ and $\alpha,\beta\in\langle a,b\rangle^*$, there exists $\varepsilon\in\langle x,yz\rangle$ such that $(\alpha u)^\varepsilon=\beta u$. By~\eqref{eq6}, for each $u\in U$ and $\alpha,\beta\in\langle a,b\rangle$, there exists $\varepsilon\in\langle x,yz\rangle$ such that $(\alpha uc_{m-3})^\varepsilon=\beta uc_{m-3}$.

Suppose that there exist $\zeta_0,\zeta_1,\dots,\zeta_{i-1}\in\langle x,y,z\rangle$ with $1\leqslant i\leqslant t$ and
\[
g^{\zeta_0\zeta_1\dots\zeta_{i-1}}\in\langle a,b\rangle v^{\eta_0\eta_1\dots\eta_{i-1}}\setminus U.
\]
If $\eta_i=\chi$, then let $\zeta_i=x$. If $\eta_i=\psi$, then there exists $\varepsilon_i\in\langle x,yz\rangle$ such that $(g^{\zeta_0\zeta_1\dots\zeta_{i-1}})^{\varepsilon_i}=a^2v^{\eta_0\eta_1\dots\eta_{i-1}}$ and we let $\zeta_i=\varepsilon_i y$.
If $\eta_i=\psi\omega$, then there exists $\varepsilon_i\in\langle x,yz\rangle$ such that $(g^{\zeta_0\zeta_1\dots\zeta_{i-1}})^{\varepsilon_i}=av^{\eta_0\eta_1\dots\eta_{i-1}}$ and we let $\zeta_i=\varepsilon_i y$. It follows that
\[
g^{\zeta_0\zeta_1\dots\zeta_{i-1}\zeta_i}=(g^{\zeta_0\zeta_1\dots\zeta_{i-1}})^{\zeta_i}
\in\left(\langle a,b\rangle v^{\eta_0\eta_1\dots\eta_{i-1}}\setminus U\right)^{\zeta_i}
\subseteq\langle a,b\rangle v^{\eta_0\eta_1\dots\eta_i}\setminus U.
\]

By induction we now have $\zeta_0,\zeta_1,\dots,\zeta_t\in\langle x,y,z\rangle$ such that
\[
g^{\zeta_0\zeta_1\dots\zeta_t}\in\langle a,b\rangle v^{\eta_0\eta_1\dots\eta_t}\setminus U=\langle a,b\rangle c_{m-3}.
\]
Then as there exists $\varepsilon\in\langle x,yz\rangle$ such that $(g^{\zeta_0\zeta_1\dots\zeta_t})^\varepsilon=c_{m-3}$, we have $g^\zeta=c_{m-3}$ with $\zeta:=\zeta_0\zeta_1\dots\zeta_t\varepsilon\in\langle x,y,z\rangle$.
\end{proof}

\begin{lemma}\label{lem5}
Suppose that $m$ is odd. Then $\langle x,y,z\rangle$ is transitive on $H^*$.
\end{lemma}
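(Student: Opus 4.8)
The plan is to show that the orbit of the identity $1 \in H$ under $\langle x, y, z \rangle$ is all of $H^*$, which by transitivity of the point stabilizer arguments below gives transitivity on $H^*$. First I would observe that Lemma~\ref{lem4} already does most of the work: it shows every $g \in H \setminus U$ lies in the same $\langle x,y,z\rangle$-orbit as $c_{m-3}$. So the group $\langle x,y,z\rangle$ has at most one orbit meeting $H \setminus U$, and it remains to understand how this orbit interacts with $U$ and to check that $U^* = U \setminus \{1\}$ is also swept in. Since $\{U, Uc_{m-3}\}$ partitions $\langle c_1, \dots, c_{m-3}\rangle$, and $H \setminus U$ contains in particular $Uc_{m-3}$, the element $c_{m-3}$ is a concrete representative to work from.

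Next I would use the explicit chains~\eqref{eq1}--\eqref{eq6} to connect $c_{m-3}$ to the rest of $H$. The key point is that these chains, read appropriately, let one move between the ``$U$-part'' and the ``$Uc_{m-3}$-part'' of a coset of $\langle a, b\rangle$: for instance~\eqref{eq3} starts at $uc_{m-3}$ and visits elements such as $a^{-1}buc_{m-3}$, $abuc_{m-3}$ etc., and crucially the images $u^{xy}$, $u^y$ appearing there range over nontrivial twists of $u$, so iterating gives access to various $u' c_{m-3}$ and to elements $\alpha u'$ with $\alpha \in \langle a,b\rangle^*$. Combined with Lemma~\ref{lem6}, which identifies $(xyz)^8$ as a product of $3$-cycles permuting $\{a^2 u, a^{-1}u, a^2 b u\}$ and $\{bu, abu, a^{-1}bu\}$, and with~\eqref{eq5}, which moves freely among $\langle a, b\rangle^* u$, one sees that once the orbit contains one element of the form $\alpha u$ with $\alpha \neq 1$ it contains all of $\langle a,b\rangle^* u$ and, via~\eqref{eq4} or the $y$-steps, reaches into $u c_{m-3}$-type elements too. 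Since Lemma~\ref{lem3} (applied with $\ell = m-3$, $e_i = c_i$, via $\chi, \psi, \psi\omega$) guarantees one can move the ``$c$-coordinate'' to any target, the orbit of $c_{m-3}$ already contains every $g \in H \setminus U$; what remains is to produce a single element of $U^*$ in it.

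For that last step I would exhibit one explicit arrow from $H \setminus U$ into $U^*$. Examining~\eqref{eq4}: the chain passes through $a^{-1}u^y c_{m-4}c_{m-3}$ and $au^{xy}c_{m-4}c_{m-3}$, i.e.\ elements whose $c$-part is $u^y c_{m-4}c_{m-3}$ or $u^{xy}c_{m-4}c_{m-3}$; for suitable $u$ (e.g.\ $u = c_{m-4}$, using that $c_{m-4} \in U$ since its index is even, so $c_{m-4}^y$ has a predictable form) the product $u^y c_{m-4} c_{m-3}$ can be arranged to lie in $U$ and be nontrivial. Alternatively, and more cleanly, I expect~\eqref{eq3} to do it: there $uc_{m-3} \xrightarrow{y} \cdots$ produces terms like $a^{-1}bu^y c_{m-6}c_{m-5}c_{m-3}$ and the $z$-images strip the $\langle a,b\rangle$-part in places, and by choosing $u$ so that $u^y c_{m-6}c_{m-5}$ has even ``$2j$-weight'' one lands in $U$. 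Once a nontrivial element of $U$ is in the orbit, applying Lemma~\ref{lem4}'s toolkit in reverse (the permutations $\chi, \psi, \psi\omega$ on the $c$-coordinates again generate a transitive group on $\langle c_1,\dots,c_{m-3}\rangle$) together with~\eqref{eq1} shows $U^* $ lies entirely in the orbit. Hence the orbit of $1$—equivalently, after a single arrow, the common orbit containing $c_{m-3}$—is all of $H^*$.

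The main obstacle I anticipate is the bookkeeping in the last step: verifying that some element of $U^*$ genuinely appears in one of the chains~\eqref{eq3}--\eqref{eq6}, i.e.\ checking the parity condition $\sum_j k_{2j} \equiv 0$ for the relevant $c$-word after applying $y$ (which mixes coordinates via $\tau$) — this is where the specific definitions of $x$, $y$, $\tau$ and the index set of $U$ all have to be tracked simultaneously, and a sign/parity slip would break the argument. Everything else is an assembly of the already-available pieces: Lemma~\ref{lem3} for the $c$-coordinate, Lemma~\ref{lem6} and~\eqref{eq5} for the $\langle a,b\rangle$-coordinate, and Lemma~\ref{lem4} to collapse $H \setminus U$ to a single orbit.
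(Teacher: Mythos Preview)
Your reduction is correct up to the last step: after Lemma~\ref{lem4}, all of $H\setminus U$ lies in the $\langle x,y,z\rangle$-orbit of $c_{m-3}$, and only $U^*$ remains. But the plan to handle $U^*$ by finding a \emph{single} $u_0\in U^*$ in that orbit and then invoking ``Lemma~\ref{lem4}'s toolkit in reverse'' has a genuine gap. The simulation of $\chi,\psi,\psi\omega$ by words in $x,y,z$ carried out in Lemma~\ref{lem4} depends essentially on the element lying in $H\setminus U$: to realise $\psi\omega$ one first uses~\eqref{eq5} or~\eqref{eq6} to move the element to $av$ (or $a^2v$) and only then applies $y$, and those chains are only available for elements with nontrivial $\langle a,b\rangle$-part or for elements of $Uc_{m-3}$. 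For $u\in U^*$ no such preliminary move is available; on $U$ the generators act merely as $\chi$ (via $x$) and $\psi=\tau$ (via $y$ or $z$), and you have no mechanism to simulate the translation $\omega$. So knowing that one $u_0\in U^*$ has reached the big orbit tells you nothing about a different $u\in U^*$: to reach $u$ from the orbit you would still need $u^{\zeta}\in H\setminus U$ for some $\zeta$, which is exactly the unproved statement.

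The paper resolves this by arguing in the correct direction: it shows, for \emph{each} $u\in U^*$, an explicit $\varepsilon\in\langle x,y\rangle$ with $u^\varepsilon\in Uc_{m-3}$. Writing $u=c_1^{k_1}\cdots c_{m-3}^{k_{m-3}}$ and letting $s$ be the least index with $k_{2s+1}+k_{2s+2}>0$, one takes $\varepsilon=x^{k_{2s+1}+k_{2s+2}-1}y(xy)^s$ and proves by induction on $s$ that $u^\varepsilon\in Uc_{m-3}$; the base case is a direct check and the inductive step shows $u^{x^{k_{2s+1}+k_{2s+2}-1}y}\in c_{2s-1}c_{2s}V_{2s+1}$, reducing $s$ by one. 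Once $u^\varepsilon\in Uc_{m-3}\subseteq H\setminus U$, Lemma~\ref{lem4} applies and the proof is complete. Your intuition that $y$ (acting via $\tau$) can push elements of $U$ into $Uc_{m-3}$ is exactly right, but it must be verified for every $u\in U^*$, not deduced from a single instance.
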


\begin{proof}
In view of Lemma~\ref{lem4}, we only need to prove that for each $u\in U^*$, there exists $\varepsilon\in\langle x,y\rangle$ such that $u^\varepsilon\in Uc_{m-3}$. Write $u=c_1^{k_1}c_2^{k_2}\cdots c_{m-3}^{k_{m-3}}$ with $k_1,k_2,\dots,k_{m-3}\in\{0,1\}$. Denote $V_i=\langle c_i,c_{i+1},\dots,c_{m-3}\rangle$ for $1\leqslant i\leqslant m-3$, and set $V_{m-2}=1$. Let $s$ be the smallest integer in $\{0,1,\dots,(m-5)/2\}$ such that $k_{2s+1}+k_{2s+2}>0$. Taking
\[
\varepsilon=x^{k_{2s+1}+k_{2s+2}-1}y(xy)^s,
\]
we prove below that $u^\varepsilon\in Uc_{m-3}$ by induction on $s$.

First suppose $s=0$. If $k_1=0$ and $k_2=1$, then $u=c_2u_1$ with $u_1\in V_3$ and $u_1\in Uc_{m-3}$ since $u\in U$. In this case, $u_1^y\in Uc_{m-3}$, and it follows that $u^\varepsilon=(c_2u_1)^y=c_1u_1^y\in Uc_{m-3}$. If $k_1=1$ and $k_2=0$, then $u=c_1u_1$ with $u_1\in V_3$ and $u_1\in U$ as $u\in U$. In this case, $u_1^y\in U$ and so $u^\varepsilon=(c_1u_1)^y=c_2u_1^y\in Uc_{m-3}$. If $k_1=k_2=1$, then $u=c_1c_2u_1$ with $u_1\in V_3$ and $u_1\in Uc_{m-3}$ since $u\in U$. In this case, $u_1^x=a^2u_2$ for some $u_2\in V_3\cap Uc_{m-3}$, whence $u^\varepsilon=(c_1c_2u_1)^{xy}=(a^2c_2u_1^x)^y=(c_2u_2)^y=c_1u_2^y\in Uc_{m-3}$ as $u_2^y\in Uc_{m-3}$.

Next suppose $s>0$. If $k_{2s+1}=0$ and $k_{2s+2}=1$, then $u=c_{2s+2}u_1$ with $u_1\in V_{2s+3}$, which implies $u^y=(c_{2s+2}u_1)^y=c_{2s-1}c_{2s}c_{2s+1}u_1^y\in c_{2s-1}c_{2s} V_{2s+1}$. If $k_{2s+1}=1$ and $k_{2s+2}=0$, then $u=c_{2s+1}u_1$ with $u_1\in V_{2s+3}$ and therefore $u^y=(c_{2s+1}u_1)^y=c_{2s-1}c_{2s}c_{2s+2}u_1^y\in c_{2s-1}c_{2s}V_{2s+1}$. If $k_{2s+1}=k_{2s+2}=1$, then $u=c_{2s+1}c_{2s+2}u_1$ with $u_1\in V_{2s+3}$ and $u_1\in Uc_{m-3}$ since $u\in U$. In this case, $u_1^x=a^2u_2$ for some $u_2\in V_{2s+3}$, and so $u^{xy}=(c_{2s+1}c_{2s+2}u_1)^{xy}=(a^2c_{2s+2}u_1^x)^y=(c_{2s+2}u_2)^y=c_{2s-1}c_{2s}c_{2s+1}u_2^y\in c_{2s-1}c_{2s}V_{2s+1}$. To sum up, we always have $u^{\varepsilon_0}\in c_{2s-1}c_{2s}V_{2s+1}$, where
\[
\varepsilon_0=x^{k_{2s+1}+k_{2s+2}-1}y.
\]
By the inductive hypothesis,
\[
(u^{\varepsilon_0})^{(xy)^s}=(u^{\varepsilon_0})^{x^{1+1-1}y(xy)^{s-1}}\in Uc_{m-3}.
\]
Consequently,
\[
u^{x^{k_{2s+1}+k_{2s+2}-1}y(xy)^s}=(u^{\varepsilon_0})^{(xy)^s}\in Uc_{m-3},
\]
completing the proof.
\end{proof}

\begin{lemma}\label{lem1}
Suppose that $m$ is odd. Then $\langle x,y,R(H)\rangle=\Alt(H)$.
\end{lemma}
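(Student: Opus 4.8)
The plan is to combine the transitivity of $\langle x,y,z\rangle$ on $H^*$ (Lemma~\ref{lem5}) with the fact that $R(H)$ is regular on $H$ to show $\langle x,y,R(H)\rangle$ is $2$-transitive on $H$, and then use the classification of $2$-transitive groups on $2^m$ points (Lemma~\ref{lem2}) to pin down the group. First I would observe that since $z\in R(h)\langle y\rangle R(h^{-1})\subseteq\langle y,R(H)\rangle$ (for odd $m$, $z=R(h)yR(h^{-1})$), we have $\langle x,y,z\rangle\leqslant\langle x,y,R(H)\rangle$. Let $G=\langle x,y,R(H)\rangle$. Since $R(H)$ is transitive on $H$ and the stabilizer $G_1$ contains $\langle x,y,z\rangle$ (noting $x,y$ fix the identity $1\in H$, as $x,\tau$ are automorphisms and $y$ fixes $K$ elementwise-images through $\tau$ fixing $1$; and $z$ fixes $1$ since $z^2=1$ and it is a conjugate of $y$ by $R(h)$ with... actually one checks $1^z=(h^{-1})^{R(h)yR(h^{-1})}$-type computation, or simply that $z$ fixes $1$ because $1\in K$ and the relevant coset computations land back at $1$), and $\langle x,y,z\rangle$ is transitive on $H^*=H\setminus\{1\}$ by Lemma~\ref{lem5}, the stabilizer $G_1$ is transitive on the remaining $2^m-1$ points. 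Hence $G$ is $2$-transitive on $H$, a set of size $2^m$.

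Next I would apply Lemma~\ref{lem2} to rule out cases (i) and (ii). For case (i), $G\leqslant\AGL_m(2)$: here the point stabilizer $G_1\leqslant\GL_m(2)$ acts on $H\setminus\{1\}\cong\mathbb{F}_2^m\setminus\{0\}$ preserving the linear structure, so it cannot contain an element like $xyz$ whose cycle structure (computed in Lemma~\ref{lem6}: a product of $3$-cycles of a specific shape, with many fixed points) is incompatible with a semilinear map — more cleanly, $\AGL_m(2)$ has order $2^m\prod_{i=0}^{m-1}(2^m-2^i)$ which is not divisible by enough; but the cleanest contradiction is that $\langle x,y,R(H)\rangle\leqslant\Alt(H)$ by Lemma~\ref{lem7}, whereas $\AGL_m(2)$ contains odd permutations (a transvection is a product of $2^{m-1}$ transpositions, which is even for $m\geqslant2$, so this needs care) — so instead I would argue that $R(H)$ together with the automorphisms $x$ does not normalize any $\mathbb{F}_2$-vector space structure; concretely, $\AGL_m(2)$ is $3$-transitive only in degenerate cases and here one shows $G$ contains elements moving a carefully chosen triple in a way $\AGL_m(2)$ cannot, or simply count: $|\AGL_m(2)|_2$ is too small to contain $R(H)$ as a normal-in-Sylow subgroup alongside $\langle x,y,z\rangle$. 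For case (ii), $\PSL_2(q)\leqslant G\leqslant\PGaL_2(q)$ with $q=2^m-1$ prime (by the Remark, using Mih\u{a}ilescu): the point stabilizer in $\PSL_2(q)$ is a Frobenius group of order $q(q-1)/2$, which has a \emph{cyclic} subgroup of index dividing $2$ acting on the $q=2^m-1$ other points; but $G_1\geqslant\langle x,y,z\rangle$ contains $R(H)$-conjugates giving a non-cyclic point stabilizer (indeed $H$ itself, of order $2^m$, does not divide $|\PGaL_2(q)|=q(q^2-1)\cdot e$ to a high enough $2$-power since $q^2-1=(2^m-2)2^m$ contributes only $2^{m+1}$... one must check the Sylow $2$-subgroup of $\PGaL_2(2^m-1)$ has order $2^{m+1}\cdot 2^{\nu_2(e)}$ versus needing $|H|=2^m$ acting semiregularly — this is consistent, so the real obstruction is structural: a Sylow $2$-subgroup of $\PSL_2(q)$ is dihedral, not elementary-abelian-times-$\D_8$ like $H$). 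So case (ii) is eliminated because $H\cong\D_8\times\mathbb{F}_2^{m-3}$ is not isomorphic to a subgroup of $\PGaL_2(2^m-1)$ acting semiregularly of the right order, or more simply because a point-regular subgroup of $G$ of order $2^m$ forces $G$ to have a subgroup of that order, but a dihedral Sylow $2$-subgroup of $\PSL_2(q)$ has order only $2(2^m-1,\ldots)$ — precisely, $\nu_2(|\PSL_2(q)|)=\nu_2(q^2-1)-\delta$ which for $q=2^m-1$ is $\nu_2((2^m)(2^m-2))=m+1$ minus the relevant amount, still potentially $\geqslant m$; hence I would instead invoke that $\PGaL_2(q)$ has no elementary abelian subgroup of rank $\geqslant 3$ (its Sylow $2$-subgroups modulo the cyclic field part are dihedral) whereas $H$ contains $\mathbb{F}_2^{m-2}$ — valid for $m\geqslant4$. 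Thus only case (iii) survives: $\A_{2^m}\leqslant G\leqslant\Sy_{2^m}$, and since $G\leqslant\Alt(H)$ by Lemma~\ref{lem7}, we get $G=\Alt(H)$.

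The main obstacle I anticipate is cleanly eliminating case (i), $G\leqslant\AGL_m(2)$: here $R(H)$ is a regular elementary-abelian-ish $2$-group but $H$ is \emph{non-abelian}, so in fact $H\not\cong\mathbb{F}_2^m$, and the translation subgroup of $\AGL_m(2)$ is the unique regular normal elementary abelian subgroup — so a regular subgroup isomorphic to the non-abelian group $H$ cannot sit inside $\AGL_m(2)$ at all (a classical fact: regular subgroups of $\AGL_m(2)$ need not be the translation group, but here one uses that $\AGL_m(2)$ is $2$-transitive with abelian point-stabilizer-complement structure — actually the quickest route is: if $G\leqslant\AGL_m(2)$ then $G$ has a normal regular elementary abelian subgroup $T\cong\mathbb{F}_2^m$; then $R(H)\cap T$ has index dividing $[G:T]$... this still needs work). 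I would resolve it by the cleanest available fact: $\AGL_m(2)$ has a regular normal subgroup $T\cong\mathbb{F}_2^m$, and any subgroup $G$ with $T\leqslant G\leqslant\AGL_m(2)$ that also contains a \emph{non-abelian} regular subgroup $R(H)$ would force $R(H)T/T$ to be a non-trivial subgroup of $\GL_m(2)$ of order $|R(H)|/|R(H)\cap T|$ acting with $R(H)\cap T$ regular on a subspace — but $R(H)\cap T\trianglelefteq R(H)$ abelian and $R(H)$ non-abelian of order $2^m$ forces $|R(H)\cap T|\leqslant 2^{m-1}$, giving $|R(H)T/T|\geqslant 2$, fine; the contradiction instead comes from $G$ being $2$-transitive with $G_1\supseteq\langle x,y,z\rangle$ transitive on $2^m-1$ points of \emph{prime-power-minus-one} size while $G_1\leqslant\GL_m(2)$ which is transitive on $\mathbb{F}_2^m\setminus\{0\}$ only as a whole — so no contradiction from transitivity alone, and one genuinely needs $R(H)$ non-abelian: since $\AGL_m(2)$'s only regular subgroup that is normal is $T$, and $\A_{2^m-1}$ is generated by $R(H)$ together with $x,y$ (this is precisely what we want to prove, circular) — therefore the honest fix is to note $x\in\Aut(H)$ acts on $R(H)$ as a non-inner-controlled automorphism so $\langle x,R(H)\rangle$ is already not contained in $\AGL_m(2)$ because its Sylow $2$-subgroup structure (containing $R(H)\rtimes\langle x\rangle$, a non-abelian $2$-group of order $2^{m+1}$ in which $R(H)$ is \emph{not} elementary abelian, as $H$ has elements of order $4$) cannot embed in the Sylow $2$-subgroup $T\rtimes(\text{unipotent upper triangular})$ of $\AGL_m(2)$ in a way making $R(H)$ regular — because a regular subgroup of order $2^m$ in $\AGL_m(2)$ must meet $T$ in a subgroup of index $\leqslant 2$, hence contains $T$-elements of order $2$ spanning a hyperplane, so $R(H)$ would be generated by involutions outside a single coset plus one more, forcing $R(H)$ abelian — contradiction with $H$ containing $a$ of order $4$. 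That last line, properly formalized, is the crux.
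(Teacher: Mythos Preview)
Your overall strategy matches the paper's: show $G=\langle x,y,R(H)\rangle$ is $2$-transitive via Lemma~\ref{lem5} and then invoke Lemma~\ref{lem2}. Your elimination of case~(ii) is in the right spirit, and the paper does it essentially as you suggest at the end: since $q=2^m-1$ is prime, $\PGaL_2(q)=\PGL_2(q)$, whose Sylow $2$-subgroup is the dihedral group $\D_{2(q+1)}=\D_{2^{m+1}}$; but $R(H)\cong\D_8\times\ZZ_2^{m-3}$ contains an elementary abelian subgroup of rank $m-2\geqslant 3$, so cannot embed in a dihedral group.

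The genuine gap is in case~(i). Your several attempts to exclude $G\leqslant\AGL_m(2)$ either do not close or rest on false premises. In particular, it is \emph{not} true that a regular subgroup of $\AGL_m(2)$ must be abelian (or must meet the translation subgroup in index at most $2$): $\AGL_m(2)$ can have non-abelian regular subgroups, so the ``$H$ non-abelian'' line of attack cannot succeed as stated. You mention Lemma~\ref{lem6} in passing but never extract the decisive consequence. The paper's argument is short and is precisely what Lemma~\ref{lem6} was set up for: if $G\leqslant\AGL_m(2)$ then the point stabiliser $G_1$ lies in $\GL_m(2)$, so for any element of $G_1$ its fixed-point set on $H$ is an $\mathbb{F}_2$-subspace and therefore has cardinality a power of $2$. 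But $(xyz)^8\in G_1$, and by Lemma~\ref{lem6} its fixed-point set has size
\[
|H|-3|U|-3|U|=2^m-6\cdot 2^{m-4}=5\cdot 2^{m-3},
\]
which is not a power of $2$. That is the contradiction you were missing. Once case~(i) is dispatched this way, case~(iii) combined with Lemma~\ref{lem7} gives $G=\Alt(H)$, exactly as you conclude.
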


\begin{proof}
Let $G=\langle x,y,R(H)\rangle$. Notice that $x$, $y$ and $z$ are all involutions of $G$ fixing $1$. By Lemma~\ref{lem5}, $\langle x,y,z\rangle$ is transitive on $H^*$, and so is $G_1$, the stabilizer of $1$ in $G$. This together with the transitivity of $R(H)$ on $H$ implies that $G$ is doubly transitive on $H$. Therefore, one of cases~(i)--(iii) in Lemma~\ref{lem2} holds.

Assume that $G\leqslant\AGL_m(2)$ as in case~(i) of Lemma~\ref{lem2}. Then $(xyz)^8\in G_1\leqslant\GL_m(2)$ and hence the set of fixed points of $(xyz)^8$ is a vector space over $\mathbb{F}_2$. However, Lemma~\ref{lem6} shows that the number of fixed points of $(xyz)^8$ is $|H|-3|U|-3|U|=5\cdot2^{m-3}$, a contradiction.

Assume that $\PSL_2(q)\leqslant G\leqslant\PGaL_2(q)$ as in case~(ii) of Lemma~\ref{lem2}, where $q=2^m-1$ is a prime. Then $R(H)\leqslant G\leqslant\PGaL_2(q)=\PGL_2(q)$. It follows that $R(H)$ is contained in $\D_{2(q+1)}$, the Sylow $2$-subgroup of $\PGL_2(q)$. This is impossible since $R(H)\cong\D_8\times\ZZ_2^{m-3}$.

Now $\Alt(H)\leqslant G\leqslant\Sym(H)$. This in conjunction with Lemma~\ref{lem7} forces $G=\Alt(H)$, which completes the proof.
\end{proof}

\subsection{Even $m$}

Throughout this subsection, let $m$ be even, $h_1=hc_{m-3}$, $H_1=\langle a,b,c_1,c_2,\dots,c_{m-4}\rangle$, $K_1=\langle a^2,b,c_1,c_2,\dots,c_{m-4}\rangle$ and
\[
U=\left\{\prod_{i=1}^{m-3}c_i^{k_i}\mid\sum_{j=1}^{(m-4)/2}k_{2j}\equiv k_{m-3}\pmod{2}\right\}.
\]
Define permutations $x_1$, $y_1$ and $z_1$ on $H_1$ such that $x_1=x|_{H_1}$, $y_1|_{K_1}=y|_{K_1}$, $y_1|_{h_1K_1}=(yR(c_{m-3}))|_{h_1K_1}$, $z_1|_{K_1}=z|_{K_1}$ and $z_1|_{h_1K_1}=(zR(c_{m-3}))|_{h_1K_1}$. One can verify readily that $(h_1k_1)^{y_1}=h_1k_1^{y_1}$ for all $k_1\in K_1$, $z_1=(R(h_1)y_1R(h_1^{-1}))|_{H_1}$ and $y_1z_1=(yz)|_{H_1}$. For each $u\in U\cap H_1$ we have
\begin{align}\label{eq7}
&uc_{m-3}\xrightarrow{xyz}bu^xc_{m-3}\xrightarrow{xyz}a^{-1}buc_{m-3}\xrightarrow{xyz}a^2bu^xc_{m-3}\xrightarrow{xyz}a^{-1}uc_{m-3}\\\nonumber
&a^{-1}uc_{m-3}\xrightarrow{xyz}au^xc_{m-3}\xrightarrow{xyz}abuc_{m-3}\xrightarrow{xyz}a^2u^xc_{m-3}\xrightarrow{xyz}uc_{m-3},
\end{align}
\begin{equation}\label{eq8}
uc_{m-4}c_{m-3}\xrightarrow{xyz}u^xc_{m-5}c_{m-4}c_{m-3}\xrightarrow{xyz}uc_{m-4}c_{m-3},
\end{equation}
\begin{equation}\label{eq9}
auc_{m-4}c_{m-3}\xrightarrow{xyz}au^xc_{m-5}c_{m-4}c_{m-3}\xrightarrow{xyz}auc_{m-4}c_{m-3}
\end{equation}
and
\begin{align}\label{eq10}
&a^2uc_{m-4}c_{m-3}\xrightarrow{xyz}bu^xc_{m-5}c_{m-4}c_{m-3}\xrightarrow{xyz}a^{-1}uc_{m-4}c_{m-3}\\\nonumber
&a^{-1}uc_{m-4}c_{m-3}\xrightarrow{xyz}abu^xc_{m-5}c_{m-4}c_{m-3}\xrightarrow{xyz}a^2buc_{m-4}c_{m-3}\\\nonumber
&a^2buc_{m-4}c_{m-3}\xrightarrow{xyz}a^{-1}bu^xc_{m-5}c_{m-4}c_{m-3}\xrightarrow{xyz}a^2uc_{m-4}c_{m-3}.
\end{align}

\begin{lemma}\label{lem11}
Suppose that $m$ is even. Then the permutation $(xyz)^8|_{H_1c_{m-3}}$ has cycle decomposition
\begin{align*}
(xyz)^8|_{H_1c_{m-3}}&=\left(\prod_{u\in U\cap H_1}(a^2uc_{m-4}c_{m-3},a^{-1}uc_{m-4}c_{m-3},a^2buc_{m-4}c_{m-3})\right)\\
&\times\left(\prod_{u\in U\cap H_1}(buc_{m-4}c_{m-3},abuc_{m-4}c_{m-3},a^{-1}buc_{m-4}c_{m-3})\right).
\end{align*}
\end{lemma}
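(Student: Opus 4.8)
The plan is to read off the cycle structure of $(xyz)^8$ on the coset $H_1c_{m-3}$ directly from the $xyz$-chains \eqref{eq7}--\eqref{eq10}, after checking that these chains between them account for every element of $H_1c_{m-3}$. I would begin with the bookkeeping. Since $c_1,\dots,c_{m-3}$ lie in the centre of $H$ and $\langle c_1,\dots,c_{m-4}\rangle=(U\cap H_1)\cup(U\cap H_1)c_{m-4}$, every element of $H_1c_{m-3}$ can be written uniquely as $a^ib^juc_{m-3}$ or as $a^ib^juc_{m-4}c_{m-3}$ with $i\in\{-1,0,1,2\}$, $j\in\{0,1\}$ and $u\in U\cap H_1$. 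I would also record two bijections of $U\cap H_1$ needed to interpret the elements $u^xc_{m-5}c_{m-4}c_{m-3}$ occurring in \eqref{eq8}--\eqref{eq10}: the map $u\mapsto u^x$, which preserves $U\cap H_1$ because for $u\in U\cap H_1$ the even-indexed generators occur an even number of times, so that the $\langle a^2\rangle$-contributions to $u^x$ cancel while the number of even-indexed generators is unchanged; and the map $u\mapsto u^xc_{m-5}$, which preserves $U\cap H_1$ as well, being the composite of $u\mapsto u^x$ with right multiplication by the odd-indexed involution $c_{m-5}$.

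With the partition in place, the three pieces follow from \eqref{eq7}--\eqref{eq10}. For the elements $a^ib^juc_{m-3}$: \eqref{eq7} shows that, for each $u\in U\cap H_1$, these eight elements form a single $xyz$-cycle of length $8$, so $(xyz)^8$ fixes every one of them; letting $u$ range over $U\cap H_1$ and using that $x$ permutes $U\cap H_1$, all elements of this form occur, and all are fixed by $(xyz)^8$. For the elements $uc_{m-4}c_{m-3}$ and $auc_{m-4}c_{m-3}$: \eqref{eq8} and \eqref{eq9} put them in $xyz$-cycles of length $2$, so again $(xyz)^8$ fixes them, and as $u$ varies this covers all elements $a^ib^juc_{m-4}c_{m-3}$ with $a^ib^j\in\{1,a\}$. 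For the remaining elements, namely $a^ib^juc_{m-4}c_{m-3}$ with $a^ib^j\in\{a^2,a^{-1},a^2b,b,ab,a^{-1}b\}$: \eqref{eq10} exhibits $xyz$-cycles of length $6$, on each of which $(xyz)^8=(xyz)^6(xyz)^2=(xyz)^2$; stepping through \eqref{eq10} two arrows at a time then yields the $3$-cycle $(a^2uc_{m-4}c_{m-3},a^{-1}uc_{m-4}c_{m-3},a^2buc_{m-4}c_{m-3})$ together with the $3$-cycle $(bu^xc_{m-5}c_{m-4}c_{m-3},abu^xc_{m-5}c_{m-4}c_{m-3},a^{-1}bu^xc_{m-5}c_{m-4}c_{m-3})$. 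Substituting $u$ for $u^xc_{m-5}$ in the second family via the bijection noted above and collecting the three pieces gives exactly the decomposition claimed in the lemma.

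The genuinely computational ingredient is \eqref{eq7}--\eqref{eq10}, which are already established, so no serious obstacle remains; the one point requiring care is the bookkeeping itself --- verifying that the four families of $xyz$-cycles are pairwise disjoint and together exhaust the $|H_1|=2^{m-1}$ elements of $H_1c_{m-3}$, and that $u\mapsto u^x$ and $u\mapsto u^xc_{m-5}$ really are bijections of $U\cap H_1$. A cardinality count makes the exhaustion transparent: the elements $a^ib^juc_{m-3}$ number $8\,|U\cap H_1|$, while among the elements $a^ib^juc_{m-4}c_{m-3}$ exactly $2\,|U\cap H_1|$ are fixed by $(xyz)^8$ and $6\,|U\cap H_1|$ are moved, together accounting for all $16\,|U\cap H_1|$ elements, which is the full count $|H_1c_{m-3}|$.
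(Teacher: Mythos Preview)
Your argument is correct and follows essentially the same route as the paper's proof: partition $H_1c_{m-3}$ according to the $\langle a,b\rangle$-part and the presence or absence of $c_{m-4}$, then read off from \eqref{eq7}--\eqref{eq10} that the $xyz$-orbits have lengths $8$, $2$, $2$, $6$ respectively, so that $(xyz)^8$ acts trivially on the first three families and as $(xyz)^2$ on the last, yielding the asserted $3$-cycles after the substitution via the bijection $u\mapsto u^xc_{m-5}$ of $U\cap H_1$. The paper organises the partition with the notation $H_{i,j,k}=\{a^ib^juc_{m-4}^kc_{m-3}\mid u\in U\cap H_1\}$ but otherwise proceeds identically, invoking the same two bijections $u\mapsto u^x$ and $u\mapsto u^xc_{m-5}$ that you isolate.
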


\begin{proof}
Denote $H_{i,j,k}=\{a^ib^juc_{m-4}^kc_{m-3}\mid u\in U\cap H_1\}$, where $i\in\{-1,0,1,2\}$ and $j,k\in\{0,1\}$. Then $\{H_{i,j,k}\mid-1\leqslant i\leqslant2,\ 0\leqslant j\leqslant1,\ 0\leqslant k\leqslant1\}$ forms a partition of $H_1c_{m-3}$.

By~\eqref{eq8}~and~\eqref{eq9}, $(xyz)^2$ fixes $H_{0,0,1}\cup H_{1,0,1}$ pointwise and so does $(xyz)^8$. From~\eqref{eq7} one sees that $(xyz)^8$ fixes $uc_{m-3}$, $bu^xc_{m-3}$, $a^{-1}buc_{m-3}$, $a^2bu^xc_{m-3}$, $a^{-1}uc_{m-3}$, $au^xc_{m-3}$, $abuc_{m-3}$ and $a^2u^xc_{m-3}$ for all $u\in U\cap H_1$. As $\{u^x\mid u\in U\cap H_1\}=U\cap H_1$, we then conclude that $(xyz)^8$ fixes
\[
H_{0,0,0}\cup H_{0,1,0}\cup H_{-1,1,0}\cup H_{2,1,0}\cup H_{-1,0,0}\cup H_{1,0,0}\cup H_{1,1,0}\cup H_{2,0,0}
\]
pointwise. This together with the conclusion that $(xyz)^8$ fixes $H_{0,0,1}\cup H_{1,0,1}$ pointwise shows it suffices to prove that $(xyz)^8$ induces the permutation
\[
\prod_{u\in U\cap H_1}(a^2uc_{m-4}c_{m-3},a^{-1}uc_{m-4}c_{m-3},a^2buc_{m-4}c_{m-3})
\]
on $H_{2,0,1}\cup H_{-1,0,1}\cup H_{2,1,1}$ and the permutation
\[
\prod_{u\in U\cap H_1}(buc_{m-4}c_{m-3},abuc_{m-4}c_{m-3},a^{-1}buc_{m-4}c_{m-3})
\]
on $H_{0,1,1}\cup H_{1,1,1}\cup H_{-1,1,1}$. Indeed, \eqref{eq10} implies
\[
a^2uc_{m-4}c_{m-3}\xrightarrow{(xyz)^8}a^{-1}uc_{m-4}c_{m-3}\xrightarrow{(xyz)^8}
a^2buc_{m-4}c_{m-3}\xrightarrow{(xyz)^8}a^2uc_{m-4}c_{m-3}
\]
and
\begin{align*}
&bu^xc_{m-5}c_{m-4}c_{m-3}\xrightarrow{(xyz)^8}abu^xc_{m-5}c_{m-4}c_{m-3}\xrightarrow{(xyz)^8}a^{-1}bu^xc_{m-5}c_{m-4}c_{m-3}\\
&a^{-1}bu^xc_{m-5}c_{m-4}c_{m-3}\xrightarrow{(xyz)^8}bu^xc_{m-5}c_{m-4}c_{m-3}
\end{align*}
for all $u\in U\cap H_1$. This yields the desired conclusion since the maps $u\mapsto u^x$ and $u\mapsto u^xc_{m-5}$ are both bijections from $U\cap H_1$ onto itself.
\end{proof}

\begin{lemma}\label{lem9}
Suppose that $m$ is even. For each $v\in U$ and $\alpha,\beta\in\langle a,b\rangle^*$, there exists $\varepsilon\in\langle x,yz\rangle$ such that $(\alpha v)^\varepsilon=\beta v$. For each $v\in\langle c_1,c_2,\dots,c_{m-3}\rangle\setminus U$ and $\alpha,\beta\in\langle a,b\rangle$, there exists $\varepsilon\in\langle x,yz\rangle$ such that $(\alpha v)^\varepsilon=\beta v$.
\end{lemma}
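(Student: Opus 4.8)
The plan is to follow the odd case, where the two assertions were read off from the explicit chains \eqref{eq5} and \eqref{eq6}. I would first observe that both $x$ and $yz$ preserve the partition $H=H_1\sqcup H_1c_{m-3}$: the automorphism $x$ carries the generators $a,b,c_1,\dots,c_{m-4}$ of $H_1$ back into $H_1$, and $(yz)|_{H_1}=y_1z_1$ is by construction a permutation of $H_1$, so each of $x$ and $yz$ maps $H_1$ onto $H_1$, hence also $H_1c_{m-3}$ onto $H_1c_{m-3}$. Thus $\langle x,yz\rangle$ acts separately on $H_1$ and on $H_1c_{m-3}$, and it suffices to treat the cases $v\in\langle c_1,\dots,c_{m-4}\rangle$ and $v=wc_{m-3}$ with $w\in\langle c_1,\dots,c_{m-4}\rangle$ separately. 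In the first case $v\in U$ exactly when $v\in U\cap H_1$; in the second case, using that $\langle c_1,\dots,c_{m-4}\rangle\setminus(U\cap H_1)=(U\cap H_1)c_{m-4}$, we have $v\notin U$ exactly when $w\in U\cap H_1$ and $v\in U$ exactly when $w\in(U\cap H_1)c_{m-4}$.

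For $v\in\langle c_1,\dots,c_{m-4}\rangle$ (so $\alpha v,\beta v\in H_1$) I would reduce to the odd case with parameter $m-1$. Comparing definitions, the triple $(x_1,y_1,z_1)$ of permutations of $H_1$ is precisely the triple $(x,y,z)$ of the construction with the odd number $m-1$ in place of $m$: $x_1$ and $y_1$ act on $a,b,c_1,\dots,c_{m-4}$ by the same formulas, $h_1$ plays the role of $h$, and $z_1=(R(h_1)y_1R(h_1^{-1}))|_{H_1}$; moreover $U\cap H_1$ is the corresponding set $U$ for the parameter $m-1$. Since $x$ and $yz$ restrict on $H_1$ to $x_1$ and $y_1z_1$ respectively, any word in $x,yz$ restricts on $H_1$ to the same word in $x_1,y_1z_1$; hence in this case the two assertions follow by exactly the computations verifying \eqref{eq5} and \eqref{eq6}, carried out for the construction with parameter $m-1$ (with $c_{m-4}$ in place of $c_{m-3}$). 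When $m=4$ we have $H_1=\langle a,b\rangle$ and the first assertion is immediate from \eqref{eq5}, the second being vacuous.

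For $v=wc_{m-3}$ with $w\in\langle c_1,\dots,c_{m-4}\rangle$ (so $\alpha v,\beta v\in H_1c_{m-3}$) I would use \eqref{eq7}--\eqref{eq10} together with $x$. If $v\notin U$, write $w=u\in U\cap H_1$. Then \eqref{eq7} is an $8$-cycle of $xyz$ running through four elements with right factor $uc_{m-3}$ and four with right factor $u^xc_{m-3}$; the same chain read with $u$ replaced by $u^x$ (note $x$ preserves $U\cap H_1$, as it preserves the parity of the sum of the even-indexed exponents) supplies the remaining four elements with right factor $uc_{m-3}$, and $x$ maps $a^{-1}uc_{m-3}$, which lies on the first $8$-cycle, to $a^{-1}u^xc_{m-3}$, which lies on the second (using $c_{m-3}^x=a^2c_{m-3}$ and $a^4=b^2=(ab)^2=1$). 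So the two cycles lie in one $\langle x,yz\rangle$-orbit, which therefore contains all of $\langle a,b\rangle uc_{m-3}$; if $u^x=u$ the single chain \eqref{eq7} already suffices. If instead $v\in U$, write $w=uc_{m-4}$ with $u\in U\cap H_1$, so $v=uc_{m-4}c_{m-3}$. I would then combine the transposition \eqref{eq9} and the $6$-cycle \eqref{eq10} read with $u$, together with the $6$-cycle \eqref{eq10} read with $u':=u^xc_{m-5}\in U\cap H_1$: between them these cycles cover all seven elements of $\langle a,b\rangle^*uc_{m-4}c_{m-3}$, and because $x$ sends $\alpha uc_{m-4}c_{m-3}$ to $\alpha^xu^xc_{m-5}c_{m-4}c_{m-3}$ (using $c_{m-4}^x=a^2c_{m-5}c_{m-4}$ and $c_{m-3}^x=a^2c_{m-3}$) it glues these three cycles into a single $\langle x,yz\rangle$-orbit; the subcase $u^xc_{m-5}=u$ is easier, and for $m=4$ this case does not arise. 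This exhausts all possibilities.

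The step I expect to be the main obstacle is the last subcase, $v\in U$ with $v$ involving $c_{m-3}$: one must check that the transposition \eqref{eq9} and the two copies of the $6$-cycle \eqref{eq10} genuinely merge into one orbit. Doing this cleanly requires careful bookkeeping of the $\D_8$-coset of the left $\langle a,b\rangle$-factor modulo $a^4=b^2=(ab)^2=1$, of the $a^2$-twists produced by $c_{m-3}^x$ and $c_{m-4}^x$, and of the facts that $x$ preserves $U\cap H_1$ while toggling the exponent of $c_{m-5}$; once the connecting elements are chosen correctly, what remains is a finite verification.
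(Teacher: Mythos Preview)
Your proposal is correct. The reduction of the $H_1$-half to the odd case via $x|_{H_1}=x_1$ and $(yz)|_{H_1}=y_1z_1$ is exactly what the paper does, and your identification of $(x_1,y_1,z_1)$ with the construction for the odd parameter $m-1$ (with $h_1$ playing the role of $h$ and $U\cap H_1$ the role of $U$) is precisely what underlies the paper's appeal to \eqref{eq5} and \eqref{eq6}.

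For the $H_1c_{m-3}$-half your route differs slightly from the paper's. The paper writes down fresh short chains using $yz$, $xyzx$ and $(xyz)^2$ that directly walk through all of $\langle a,b\rangle uc_{m-3}$ (respectively all of $\langle a,b\rangle^*uc_{m-4}c_{m-3}$) for $u\in U\cap H_1$. You instead recycle the $xyz$-cycles already recorded in \eqref{eq7}--\eqref{eq10} and glue them with $x$, using that $(c_{m-3})^x=a^2c_{m-3}$ and $(c_{m-4})^x=a^2c_{m-5}c_{m-4}$ send $\alpha uc_{m-3}\mapsto\alpha^x u^xc_{m-3}$ and $\alpha uc_{m-4}c_{m-3}\mapsto\alpha^x(u^xc_{m-5})c_{m-4}c_{m-3}$. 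Since $xyz=x\cdot(yz)\in\langle x,yz\rangle$, this is a legitimate and clean way to exhibit the required $\langle x,yz\rangle$-orbits; the gluing you describe (e.g.\ $(a^{-1}uc_{m-3})^x=a^{-1}u^xc_{m-3}$ linking the two $8$-cycles, and $(auc_{m-4}c_{m-3})^x=a^{-1}u'c_{m-4}c_{m-3}$, $(a^2uc_{m-4}c_{m-3})^x=a^2u'c_{m-4}c_{m-3}$ linking \eqref{eq9} and the two copies of \eqref{eq10}) checks out. Your remark that $x$ preserves $U\cap H_1$ is correct because for $u\in U\cap H_1$ the total $a^2$-twist cancels and the parity of the even-indexed exponents is unchanged. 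The two presentations amount to the same finite verification, organised differently; your version has the small advantage of reusing \eqref{eq7}--\eqref{eq10}, the paper's the advantage of shorter explicit chains.
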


\begin{proof}
First recall from~\eqref{eq5} that for each $u\in U\cap H_1$ and $\alpha,\beta\in\langle a,b\rangle^*$, there exist integers $k_1,k_2,\dots,k_{2r-1},k_{2r}$ with
\[
(\alpha u)^{\prod_{i=1}^rx_1^{k_{2i-1}}(y_1z_1)^{k_{2i}}}=\beta u.
\]
Then taking $\varepsilon=\prod_{i=1}^rx^{k_{2i-1}}(yz)^{k_{2i}}$, we have $(\alpha u)^\varepsilon=\beta u$ since $x|_{H_1}=x_1$ and $(yz)|_{H_1}=y_1z_1$. Next note that for $u\in U\cap H_1$,
\begin{align*}
&buc_{m-4}c_{m-3}\xrightarrow{yz}a^2buc_{m-4}c_{m-3}\xrightarrow{yz}a^2uc_{m-4}c_{m-3}\xrightarrow{(xyz)^2}a^{-1}uc_{m-4}c_{m-3}\\
\nonumber
&auc_{m-4}c_{m-3}\xrightarrow{zy}a^{-1}uc_{m-4}c_{m-3}\xrightarrow{zy}abuc_{m-4}c_{m-3}\xrightarrow{(xyz)^2}a^{-1}buc_{m-4}c_{m-3}.
\end{align*}
We then conclude that for each $v\in U$ and $\alpha,\beta\in\langle a,b\rangle^*$, there exists $\varepsilon\in\langle x,yz\rangle$ such that $(\alpha v)^\varepsilon=\beta v$.

Similarly, one derives from~\eqref{eq6} that for each $u\in U\cap H_1$ and $\alpha,\beta\in\langle a,b\rangle$, there exists $\varepsilon\in\langle x,yz\rangle$ with $(\alpha uc_{m-4})^\varepsilon=\beta uc_{m-4}$. Moreover,
\begin{align*}
&a^{-1}uc_{m-3}\xrightarrow{yz}auc_{m-3}\xrightarrow{xyzx}a^2buc_{m-3}\xrightarrow{yz}a^2uc_{m-3}\xrightarrow{yz}buc_{m-3}\\\nonumber
&auc_{m-3}\xrightarrow{yz}abuc_{m-3}\xrightarrow{xyzx}uc_{m-3}\xrightarrow{xyzx}a^{-1}buc_{m-3}
\end{align*}
for all $u\in U\cap H_1$. Hence for each $v\in\langle c_1,c_2,\dots,c_{m-3}\rangle\setminus U$ and $\alpha,\beta\in\langle a,b\rangle$, there exists $\varepsilon\in\langle x,yz\rangle$ such that $(\alpha v)^\varepsilon=\beta v$.
\end{proof}

\begin{lemma}\label{lem8}
Suppose that $m$ is even. Then for each $g\in H\setminus U$, there exists $\zeta\in\langle x,y,z\rangle$ such that $g^\zeta=h$.
\end{lemma}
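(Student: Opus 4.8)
The plan is to mirror, in the even case, the inductive argument that proved Lemma~\ref{lem4} for odd $m$, with $h$ playing the role that $c_{m-3}$ played there. Given $g\in H\setminus U$, first write $g\in\langle a,b\rangle v$ for a unique $v\in\langle c_1,\dots,c_{m-3}\rangle$. The key point is that $v$ lies outside the ``even'' index set determining $U$ together with the parity of the $c_{m-3}$-exponent (or, after possibly hitting $g$ with an element of $\langle x,yz\rangle$ to fix up the $\langle a,b\rangle$-coordinate using Lemma~\ref{lem9}, we may arrange $g$ itself). The strategy is then to move $v$ to a canonical representative by repeatedly applying the maps $\chi$, $\psi$, $\psi\omega$ of Lemma~\ref{lem3} — now with $\ell=m-3$ odd, so Lemma~\ref{lem3} does not directly apply, which is exactly the wrinkle the even case introduces and which I expect to be the main obstacle. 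To get around this I would instead apply Lemma~\ref{lem3} with $\ell=m-4$ on the subgroup $\langle c_1,\dots,c_{m-4}\rangle=\langle c_1,\dots,c_{m-3}\rangle\cap H_1$, treating the $c_{m-3}$-coordinate separately and tracking it through the action of $y$ (which, by its definition for even $m$, multiplies by $c_{m-3}$ on the coset $hK$).

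Concretely, the steps I would carry out are: (1) reduce to controlling the $\langle c_1,\dots,c_{m-4}\rangle$-part of $v$, using that $x$ restricted to $H_1$ equals $x_1$ and $(yz)|_{H_1}=y_1z_1$, so that the conjugates of $R(c_i)$ for $i\le m-4$ lie in $\langle x_1,y_1,z_1\rangle$ by the argument in Lemma~\ref{lem3}; (2) use Lemma~\ref{lem9} as the even-$m$ substitute for the facts derived from~\eqref{eq5} and~\eqref{eq6} in the odd case, namely that within each coset $\langle a,b\rangle v$ all the $\langle a,b\rangle$-prefixes are in one $\langle x,yz\rangle$-orbit (this is where the hypothesis $g\notin U$ is used, since the orbit structure differs on $U$ versus its complement); (3) run the induction, at each stage choosing $\zeta_i\in\langle x,y,z\rangle$ of the form $\varepsilon_i y$ with $\varepsilon_i\in\langle x,yz\rangle$ chosen by Lemma~\ref{lem9} so that $g$'s current $\langle a,b\rangle$-coordinate is put into the right position ($a^2$, $a$, or left alone) for $y$ to realize the desired automorphism $\chi$, $\psi$, or $\psi\omega$ on the $c$-coordinates — exactly as in Lemma~\ref{lem4}; (4) after the induction, $g$ has been moved into $\langle a,b\rangle\tilde v$ for a fixed $\tilde v$, and a final application of Lemma~\ref{lem9} together with the explicit orbit relations like~\eqref{eq7}--\eqref{eq10} lands $g$ on $h$ itself.

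The main obstacle, as indicated, is bookkeeping the $c_{m-3}$-coordinate: unlike the odd case where $y=\tau$ on $K$ fixes $c_{m-3}$, here $y$ and $z$ interact with $c_{m-3}$ through the $R(c_{m-3})$ twists in the definitions of $y$ and $z$ for even $m$, so I must verify that the ``target'' element is $h$ (equivalently $hc_{m-3}^{0}$) rather than $hc_{m-3}$, and that the parity condition defining $U$ is exactly the obstruction to reaching $h$ — i.e. that $g\in U$ iff $g$ cannot be sent to $h$. I expect this to come out cleanly because the relations~\eqref{eq7}--\eqref{eq10} show $(xyz)$ and its relatives act on $H_1c_{m-3}$ in a way that permutes all $\langle a,b\rangle$-prefixes transitively on the complement of $U$, and the explicit commuting-diagram bookkeeping from the proof of Lemma~\ref{lem7} (relating the action on $K$ to that on $hK$) tells us precisely how the $c_{m-3}$ exponent flips. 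Once that is pinned down, the rest is a routine transcription of the odd-$m$ induction.
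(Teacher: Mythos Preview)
Your plan is correct and matches the paper's proof essentially step for step: the paper also applies Lemma~\ref{lem3} with $\ell=m-4$ on $\langle c_1,\dots,c_{m-4}\rangle$ (writing $g\in\langle a,b,c_{m-3}\rangle v$ so that $c_{m-3}$ is absorbed into the prefix rather than tracked separately), uses Lemma~\ref{lem9} at each stage to set the $\langle a,b\rangle$-coordinate to $a^2$ or $a$ before applying $y$ (while $\chi$ is realized by $\zeta_i=x$, not by $y$), and finishes with one more application of Lemma~\ref{lem9} to land in $h_1\langle c_{m-3}\rangle$, followed by an optional $y$ since $h_1^y=h$. The only simplification relative to your sketch is that absorbing $c_{m-3}$ into the prefix group $\langle a,b,c_{m-3}\rangle$ from the outset makes the $c_{m-3}$-bookkeeping automatic, so the induction hypothesis is simply $g^{\zeta_0\cdots\zeta_{i-1}}\in\langle a,b,c_{m-3}\rangle v^{\eta_0\cdots\eta_{i-1}}\setminus U$ and no separate parity tracking is needed until the very last step.
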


\begin{proof}
Let $v\in\langle c_1,c_2,\dots,c_{m-4}\rangle$ such that $g\in\langle a,b,c_{m-3}\rangle v$, and define $\chi$, $\psi$ and $\omega$ as in Lemma~\ref{lem3} with $\ell=m-4$ and $e_i=c_i$. Then $\chi$, $\psi$ and $\psi\omega$ are all involutions in $\Sym(\langle c_1,c_2,\dots,c_{m-4}\rangle)$, and by Lemma~\ref{lem3}, there exist $\eta_1,\eta_2,\dots,\eta_t\in\{\chi,\psi,\psi\omega\}$ such that $v^{\eta_1\eta_2\cdots\eta_t}=a^{-1}h_1$. Let $\eta_0=1\in\Sym(\langle c_1,c_2,\dots,c_{m-4}\rangle)$ and $\zeta_0=1\in\langle x,y,z\rangle$. Obviously, $g^{\zeta_0}\in\langle a,b,c_{m-3}\rangle v^{\eta_0}\setminus U$. We shall prove by induction that there exist $\zeta_0,\zeta_1,\dots,\zeta_t\in\langle x,y,z\rangle$ with
\[
g^{\zeta_0\zeta_1\dots\zeta_t}\in\langle a,b\rangle v^{\eta_0\eta_1\dots\eta_t}\setminus U.
\]

Suppose there exist $\zeta_0,\zeta_1,\dots,\zeta_{i-1}\in\langle x,y,z\rangle$ for some $i\in\{1,\dots,t\}$ such that
\[
g^{\zeta_0\zeta_1\dots\zeta_{i-1}}\in\langle a,b,c_{m-3}\rangle v^{\eta_0\eta_1\dots\eta_{i-1}}\setminus U.
\]
If $\eta_i=\chi$, then let $\zeta_i=x$. If $\eta_i=\psi$, then by Lemma \ref{lem9} there exists $\varepsilon_i\in\langle x,yz\rangle$ such that $(g^{\zeta_0\zeta_1\dots\zeta_{i-1}})^{\varepsilon_i}\in a^2\langle c_{m-3}\rangle v^{\eta_0\eta_1\dots\eta_{i-1}}$ and we let $\zeta_i=\varepsilon_i y$.
If $\eta_i=\psi\omega$, then by Lemma \ref{lem9} there exists $\varepsilon_i\in\langle x,yz\rangle$ such that $(g^{\zeta_0\zeta_1\dots\zeta_{i-1}})^{\varepsilon_i}\in a\langle c_{m-3}\rangle v^{\eta_0\eta_1\dots\eta_{i-1}}$ and we let $\zeta_i=\varepsilon_i y$. It follows that
\[
g^{\zeta_0\zeta_1\dots\zeta_{i-1}\zeta_i}=(g^{\zeta_0\zeta_1\dots\zeta_{i-1}})^{\zeta_i}
\in\left(\langle a,b,c_{m-3}\rangle v^{\eta_0\eta_1\dots\eta_{i-1}}\setminus U\right)^{\zeta_i}
\subseteq\langle a,b,c_{m-3}\rangle v^{\eta_0\eta_1\dots\eta_i}\setminus U.
\]

By induction we now have $\zeta_0,\zeta_1,\dots,\zeta_t\in\langle x,y,z\rangle$ such that
\[
g^{\zeta_0\zeta_1\dots\zeta_t}\in\langle a,b,c_{m-3}\rangle v^{\eta_0\eta_1\dots\eta_t}\setminus U=\langle a,b,c_{m-3}\rangle a^{-1}h_1\setminus U.
\]
Then by Lemma \ref{lem9} there exists $\varepsilon\in\langle x,yz\rangle$ such that $(g^{\zeta_0\zeta_1\dots\zeta_t})^\varepsilon\in h_1\langle c_{m-3}\rangle$. Let $\zeta =\zeta_0\zeta_1\dots\zeta_{r}\varepsilon y$ if $(g^{\zeta_0\zeta_1\dots\zeta_{r}})^{\varepsilon}=h_1$ and let $\zeta =\zeta_0\zeta_1\dots\zeta_{r}\varepsilon $ if $(g^{\zeta_0\zeta_1\dots\zeta_{r}})^{\varepsilon}=h_1c_{m-3}$. Then $\zeta\in\langle x,y,z\rangle$, and in view of $h=h_1c_{m-3}=h_1^y$ we have $g^\zeta=h$.
\end{proof}

\begin{lemma}\label{lem10}
Suppose that $m$ is even. Then $\langle x,y,z\rangle$ is transitive on $H^*$.
\end{lemma}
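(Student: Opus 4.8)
The plan is to imitate the proof of Lemma~\ref{lem5} from the odd case. By Lemma~\ref{lem8}, every element of $H\setminus U$ lies in a single orbit of $\langle x,y,z\rangle$ on $H$, namely the orbit of $h$ (and $h\in H\setminus U$ because $h\notin\langle c_1,\dots,c_{m-3}\rangle$). Moreover $x$, $y$ and $z$ all fix the identity of $H$: this is clear for $x\in\Aut(H)$ and for $y$ since $1^y=1^\tau=1$, while $1^z=(h^y)(h^{-1}c_{m-3})=(hc_{m-3})(h^{-1}c_{m-3})=1$ using that $c_{m-3}$ is central. Hence $\langle x,y,z\rangle$ acts on $H^*$, and since $1\in U$ we have $H^*=(H\setminus U)\cup U^*$. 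So to prove transitivity on $H^*$ it suffices to show that every $u\in U^*$ satisfies $u^\varepsilon\notin U$ for some $\varepsilon\in\langle x,y\rangle$: then $u$ lies in the orbit of an element of $H\setminus U$, hence in the orbit of $h$, and $H^*$ becomes a single orbit.

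To prove this last assertion, write $u=c_1^{k_1}c_2^{k_2}\cdots c_{m-3}^{k_{m-3}}$. If $k_1=\dots=k_{m-4}=0$ then $u\in\langle c_{m-3}\rangle$, and the congruence defining $U$ forces $k_{m-3}\equiv0\pmod 2$, so $u=1$, contrary to $u\in U^*$; hence $(k_1,\dots,k_{m-4})\neq0$. Let $s$ be the least element of $\{0,1,\dots,(m-6)/2\}$ with $k_{2s+1}+k_{2s+2}>0$ and set
\[
\varepsilon=x^{k_{2s+1}+k_{2s+2}-1}\,y\,(xy)^s,
\]
the same permutation used in Lemma~\ref{lem5}. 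I claim $u^\varepsilon\in\langle c_1,\dots,c_{m-3}\rangle\setminus U$, which, being contained in $H\setminus U$, finishes the proof.

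This claim is proved by induction on $s$, running the case analysis of Lemma~\ref{lem5} on the indices $1,\dots,m-4$. The relevant facts are: $\tau$ restricted to $\langle c_1,\dots,c_{m-4}\rangle$ has exactly the form of the automorphism $\psi$ of Lemma~\ref{lem3} with $\ell=m-4$ and fixes $c_{m-3}$; $x$ restricted to $\langle c_1,\dots,c_{m-3}\rangle$ agrees with the corresponding $\chi$ (extended by $c_{m-3}\mapsto c_{m-3}$) up to central factors $a^2$; and the defining congruence of $U$, whose right-hand side is $k_{m-3}$, is precisely what makes these $a^2$ factors cancel in pairs, so that $x$ stabilizes $U$ setwise and the computations of Lemma~\ref{lem5} carry over. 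In the base case $s=0$ one treats the subcases $(k_1,k_2)\in\{(0,1),(1,0),(1,1)\}$ directly; for $s>0$ one first applies $\varepsilon_0=x^{k_{2s+1}+k_{2s+2}-1}y$ to land in $c_{2s-1}c_{2s}\langle c_{2s+1},\dots,c_{m-3}\rangle$ and then invokes the inductive hypothesis for $(xy)^s$. A concluding parity check confirms that $u^\varepsilon$ fails the congruence cutting out $U$, whence $u^\varepsilon\notin U$.

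The main obstacle is the bookkeeping forced by the generator $c_{m-3}$, which, unlike in the odd case, both appears on the right-hand side of the congruence defining $U$ and transforms nontrivially under $x$. One must check that these two features offset each other throughout the induction, leaving the argument of Lemma~\ref{lem5} structurally intact, and that the resulting element genuinely leaves $U$. These verifications are routine but a little delicate; once they are in place the lemma follows at once.
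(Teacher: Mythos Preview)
Your proposal is correct and follows essentially the same route as the paper: reduce via Lemma~\ref{lem8} to showing that every $u\in U^*$ can be sent into $\langle c_1,\dots,c_{m-3}\rangle\setminus U=Uc_{m-3}$ by some $\varepsilon\in\langle x,y\rangle$, and then use the same word $\varepsilon=x^{k_{2s+1}+k_{2s+2}-1}y(xy)^s$ and the same induction on $s$ as in Lemma~\ref{lem5}. The only difference is cosmetic: the paper spells out each subcase of the base step and of the inductive step explicitly (and phrases the target as $u^\varepsilon\in Uc_{m-3}$), whereas you summarize these as ``routine but delicate'' verifications; the underlying computations and the role of the congruence defining $U$ in cancelling the $a^2$-factors are identical.
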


\begin{proof}
Due to Lemma~\ref{lem9}, we only need to prove that for each $u\in U^*$, there exists $\varepsilon\in\langle x,y\rangle$ such that $u^\varepsilon\in Uc_{m-3}$. Write $u=c_1^{k_1}c_2^{k_2}\cdots c_{m-3}^{k_{m-3}}$ with $k_1,k_2,\dots,k_{m-3}\in\{0,1\}$. Denote $V_i=\langle c_i,c_{i+1},\dots,c_{m-3}\rangle$ for $1\leqslant i\leqslant m-3$. Since $u\neq c_{m-3}$, there exists $0\leqslant j\leqslant(m-6)/2$ such that $k_{2j+1}+k_{2j+2}>0$. Let $s$ be the smallest integer in $\{0,1,\dots,(m-6)/2\}$ such that $k_{2s+1}+k_{2s+2}>0$. Taking
\[
\varepsilon=x^{k_{2s+1}+k_{2s+2}-1}y(xy)^s,
\]
we prove below that $u^\varepsilon\in Uc_{m-3}$ by induction on $s$.

First suppose $s=0$. If $k_1=0$ and $k_2=1$, then $u=c_2u_1$ with $u_1\in V_3$ and $u_1\in Uc_{m-3}$ since $u\in U$. In this case, $u_1^y\in Uc_{m-3}$, and it follows that $u^\varepsilon=(c_2u_1)^y=c_1u_1^y\in Uc_{m-3}$. If $k_1=1$ and $k_2=0$, then $u=c_1u_1$ with $u_1\in V_3$ and $u_1\in U$ as $u\in U$. In this case, $u_1^y\in U$ and so $u^\varepsilon=(c_1u_1)^y=c_2u_1^y\in Uc_{m-3}$. If $k_1=k_2=1$, then $u=c_1c_2u_1$ with $u_1\in V_3$ and $u_1\in Uc_{m-3}$ since $u\in U$. In this case, $u_1^x=a^2u_2$ for some $u_2\in V_3\cap Uc_{m-3}$, whence $u^\varepsilon=(c_1c_2u_1)^{xy}=(a^2c_2u_1^x)^y=(c_2u_2)^y=c_1u_2^y\in Uc_{m-3}$ as $u_2^y\in Uc_{m-3}$.

Next suppose $s>0$. If $k_{2s+1}=0$ and $k_{2s+2}=1$, then $u=c_{2s+2}u_1$ with $u_1\in V_{2s+3}$, which implies $u^y=(c_{2s+2}u_1)^y=c_{2s-1}c_{2s}c_{2s+1}u_1^y\in c_{2s-1}c_{2s} V_{2s+1}$. If $k_{2s+1}=1$ and $k_{2s+2}=0$, then $u=c_{2s+1}u_1$ with $u_1\in V_{2s+3}$ and therefore $u^y=(c_{2s+1}u_1)^y=c_{2s-1}c_{2s}c_{2s+2}u_1^y\in c_{2s-1}c_{2s}V_{2s+1}$. If $k_{2s+1}=k_{2s+2}=1$, then $u=c_{2s+1}c_{2s+2}u_1$ with $u_1\in V_{2s+3}$ and $u_1\in Uc_{m-3}$ since $u\in U$. In this case, $u_1^x=a^2u_2$ for some $u_2\in V_{2s+3}$, and so $u^{xy}=(c_{2s+1}c_{2s+2}u_1)^{xy}=(a^2c_{2s+2}u_1^x)^y=(c_{2s+2}u_2)^y=c_{2s-1}c_{2s}c_{2s+1}u_2^y\in c_{2s-1}c_{2s}V_{2s+1}$. To sum up, we always have $u^{\varepsilon_0}\in c_{2s-1}c_{2s}V_{2s+1}$, where
\[
\varepsilon_0=x^{k_{2s+1}+k_{2s+2}-1}y.
\]
By the inductive hypothesis,
\[
(u^{\varepsilon_0})^{(xy)^s}=(u^{\varepsilon_0})^{x^{1+1-1}y(xy)^{s-1}}\in Uc_{m-3}.
\]
Consequently,
\[
u^{x^{k_{2s+1}+k_{2s+2}-1}y(xy)^s}=(u^{\varepsilon_0})^{(xy)^s}\in Uc_{m-3},
\]
completing the proof.
\end{proof}

\begin{lemma}\label{lem12}
Suppose that $m$ is even. Then $\langle x,y,R(H)\rangle=\Alt(H)$.
\end{lemma}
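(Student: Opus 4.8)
The plan is to mimic the proof of Lemma~\ref{lem1}. Put $G=\langle x,y,R(H)\rangle$; by Lemma~\ref{lem7} we already have $G\leqslant\Alt(H)$, so it suffices to prove $\Alt(H)\leqslant G$. First I would observe that $x$, $y$ and $z$ all lie in $G$ and fix the identity of $H$ (for $z$, use $z\in\langle y,R(H)\rangle$ together with $h^y=hc_{m-3}$), so that by Lemma~\ref{lem10} the point stabiliser $G_1$ is transitive on $H^*$; combined with the transitivity of $R(H)$ this makes $G$ doubly transitive on the $2^m$-point set $H$. Hence one of the cases (i)--(iii) of Lemma~\ref{lem2} holds. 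Case (ii) is excluded at once, since for even $m\geqslant4$ the integer $2^m-1=(2^{m/2}-1)(2^{m/2}+1)$ is composite and so is not a prime power (equivalently, by the Remark after Lemma~\ref{lem2}, $m$ would have to be odd in that case).

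To eliminate case (i), suppose $G\leqslant\AGL_m(2)$. Then $(xyz)^8\in G_1\leqslant\GL_m(2)$, so its set of fixed points in $H$ is an $\mathbb{F}_2$-subspace and in particular $|\Fix((xyz)^8)|$ is a power of $2$. I claim it equals $5\cdot2^{m-3}$, which is not a power of $2$ --- the desired contradiction, forcing case (iii) and hence $G=\Alt(H)$. For the count, note that $(xyz)^8$ stabilises $H_1$ (because $x$ does and $(yz)|_{H_1}=y_1z_1$) and therefore also its complement $H_1c_{m-3}$. On $H_1c_{m-3}$, Lemma~\ref{lem11} shows $(xyz)^8$ is a product of $3$-cycles moving $6|U\cap H_1|=3\cdot2^{m-4}$ points; on $H_1$, the data $(H_1,R(H_1),x_1,y_1,z_1)$ is precisely the odd-case configuration for the integer $m-1$ (with $h_1,H_1,K_1,x_1,y_1,z_1$ in the roles of $h,H,K,x,y,z$ there), so Lemma~\ref{lem6} applied with $m-1$ in place of $m$ shows $(xyz)^8|_{H_1}$ is again a product of $3$-cycles moving $3\cdot2^{m-4}$ points. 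Hence $(xyz)^8$ fixes exactly $2(2^{m-1}-3\cdot2^{m-4})=5\cdot2^{m-3}$ points of $H$.

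I expect this fixed-point computation in case (i) to be the main obstacle, for two reasons. First, one must carefully verify that $(H_1,R(H_1),x_1,y_1,z_1)$ genuinely reproduces the odd-case construction with parameter $m-1$ --- i.e.\ that the formulas defining $x_1$, $y_1$, $z_1$ and the element $h_1=hc_{m-3}$ agree with those of the odd case for $m-1$ --- which is the bookkeeping that legitimises invoking Lemma~\ref{lem6} on $H_1$. Second, the base value $m=4$ is special: there $c_{m-4}=1$, the partition in the proof of Lemma~\ref{lem11} degenerates, and one must instead verify directly (a computation on the $16$ elements of $H\cong\D_8\times\ZZ_2$) that $(xyz)^8$ is a product of two $3$-cycles on $H_1$ and the identity on $H_1c_{m-3}$, so that it fixes $2+8=10=5\cdot2^{m-3}$ points, and the argument concludes as before. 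Everything outside case (i) is a direct transcription of the odd-case argument.
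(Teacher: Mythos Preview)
Your proposal is correct and follows the paper's argument line for line: double transitivity via Lemma~\ref{lem10} and the regularity of $R(H)$, case~(ii) of Lemma~\ref{lem2} excluded because $m$ is even, and case~(i) eliminated by counting $5\cdot2^{m-3}$ fixed points of $(xyz)^8$ using Lemma~\ref{lem6} on $H_1$ and Lemma~\ref{lem11} on $H_1c_{m-3}$. Your caution about $m=4$ is justified and in fact sharpens a point the paper glosses over: there $c_{m-4}=1$, the displayed relations \eqref{eq8}--\eqref{eq10} feeding into Lemma~\ref{lem11} collapse onto \eqref{eq7}, and your direct verification that $(xyz)^8$ consists of two $3$-cycles on $H_1$ and the identity on $H_1c_1$ (hence still $10=5\cdot2^{m-3}$ fixed points) is the correct patch.
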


\begin{proof}
Let $G=\langle x,y,R(H)\rangle$. Notice that $x$, $y$ and $z$ are all involutions of $G$ fixing $1$. By Lemma~\ref{lem10}, $\langle x,y,z\rangle$ is transitive on $H^*$, and so is $G_1$, the stabilizer of $1$ in $G$. This together with the transitivity of $R(H)$ on $H$ implies that $G$ is doubly transitive on $H$. Therefore, either cases~(i) or case~(iii) in Lemma~\ref{lem2} holds since $m$ is even.

Assume that $G\leqslant\AGL_m(2)$ as in case~(i) of Lemma~\ref{lem2}. Then $(xyz)^8\in G_1\leqslant\GL_m(2)$ and hence the set of fixed points of $(xyz)^8$ is a vector space over $\mathbb{F}_2$. Since $x|_{H_1}=x_1$ and $(yz)|_{H_1}=y_1z_1$, we have $(xyz)^8|_{H_1}=(x_1y_1z_1)^8$, and thus Lemma~\ref{lem6} shows that the number of fixed points of $(xyz)^8|_{H_1}$ is $|H_1|-3|U\cap H_1|-3|U\cap H_1|=5\cdot2^{m-4}$. Furthermore, the number of fixed points of $(xyz)^8|_{H_1c_{m-3}}$ is $|H_1c_{m-3}|-3|U\cap H_1|-3|U\cap H_1|=5\cdot2^{m-4}$. Hence the number of fixed points of $(xyz)^8$ is $5\cdot2^{m-4}+5\cdot2^{m-4}=5\cdot2^{m-3}$, a contradiction.

Now $\Alt(H)\leqslant G\leqslant\Sym(H)$. This in conjunction with Lemma~\ref{lem7} forces $G=\Alt(H)$, which completes the proof.
\end{proof}

\section{Proof of Theorem~\ref{thm1}}

Theorem~\ref{thm1} will follow directly from Lemmas~\ref{lem14}--\ref{lem15}.

\begin{lemma}\label{lem14}
$\Ga_m$ is a connected cubic graph.
\end{lemma}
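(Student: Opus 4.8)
The plan is to verify the two defining properties of a cubic graph separately, both following directly from the coset graph machinery recalled just before the Construction. First, for connectivity: by the general fact that $\Cos(G,H,HSH)$ is connected if and only if $\langle S,H\rangle=G$, applied with $G=\Alt(H)$, subgroup $R(H)$ and $S=\{x,y\}$, it suffices to show $\langle x,y,R(H)\rangle=\Alt(H)$. But this is exactly the content of Lemma~\ref{lem1} when $m$ is odd and Lemma~\ref{lem12} when $m$ is even, so the connectivity half is immediate once we split on the parity of $m$.

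Second, for the valency: the coset graph $\Cos(G,H,HSH)$ has valency $|HSH|/|H|$, so with $H$ replaced by $R(H)$ and $HSH=R(H)\{x,y\}R(H)$ we must show $|R(H)\{x,y\}R(H)|=3|R(H)|=3\cdot2^m$. Equivalently, viewing the valency as the number of cosets $R(H)g$ adjacent to $R(H)$, we must show that $R(H)$, $R(H)x$ and $R(H)y$ (equivalently the points $1$, $1^x=1$ under the right action — more precisely the right cosets) are pairwise distinct and that there are exactly three of them. The natural way is to observe that the neighbours of the base vertex $R(H)$ in $\Cos(\Alt(H),R(H),R(H)\{x,y\}R(H))$ correspond to the $R(H)$-orbits on $\{R(H)sg : s\in\{x,y,x^{-1},y^{-1}\}\}$; since $x,y$ are involutions by the first lemma, $HSH=R(H)\{x,y\}R(H)$ is already inverse-closed, and the valency equals the number of distinct cosets among $R(H)x$ and $R(H)y$ together with their translates under $R(H)$, which is the size of the orbit of the stabiliser $R(H)_{1}$... more cleanly: the valency is $|R(H){:}(R(H)\cap R(H)^{?})|$-type count, so one checks $R(H)\cap R(H)^{x}$ and $R(H)\cap R(H)^{y}$ each have index $2$ in $R(H)$ and that $R(H)x\ne R(H)y$, giving $2+2-1=3$... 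I would instead just directly verify $R(H)x\neq R(H)y$ and that $R(H)\cap xR(H)x^{-1}=R(H)\cap\Aut(H)=\Aut(H)$ intersected appropriately has the right index. Concretely, since $x\in\Aut(H)$ normalizes $R(H)$ only if $x$ fixes $S$, and here the relevant computation is that $x$ and $y$ each map the identity coset $R(H)$ to a coset at distance one, with $R(H)x=R(H)y$ forcing $xy^{-1}\in R(H)$, i.e. $xy\in R(H)$, which fails because $x,y\notin R(H)$ and a short check shows $xy\notin R(H)$. Thus there are at least the two distinct neighbours $R(H)x$, $R(H)y$, and the local structure (each of $x,y$ being an involution not normalizing $R(H)$) forces the valency to be exactly $3$.

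The main obstacle here is bookkeeping rather than depth: one must be careful that the coset graph is genuinely simple and cubic, i.e. that no loops or multiple edges arise, which amounts to checking $x,y\notin R(H)$ and $xy\notin R(H)$ and that $R(H)x$, $R(H)y$ are distinct and distinct from $R(H)$ — all elementary since $x$ and $y$ act on $H$ with $1^x=1^y=1$ while nontrivial elements of $R(H)$ move $1$, and because $x,y$ are genuine automorphism-like permutations not lying in the regular subgroup. Once these disjointness statements are in hand, the valency count $|R(H)\{x,y\}R(H)|/|R(H)|=3$ and the graph is connected by the preceding subsections, completing the proof that $\Ga_m$ is a connected cubic graph.
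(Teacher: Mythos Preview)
Your connectivity argument is correct and matches the paper's: it is exactly the content of Lemmas~\ref{lem1} and~\ref{lem12} that $\langle x,y,R(H)\rangle=\Alt(H)$.

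The valency computation, however, contains a genuine error. You assert (in two places) that $x$ does not normalize $R(H)$, and you try to get the count $3$ as ``$2+2-1$''. But $x\in\Aut(H)$ by construction, and \emph{every} automorphism of $H$ normalizes the right regular representation: for $\sigma\in\Aut(H)$ one has $\sigma^{-1}R(g)\sigma=R(g^\sigma)$. Hence $R(H)xR(H)=R(H)x$ is a \emph{single} right coset and contributes $1$, not $2$, to the valency. Your ``$2+2-1$'' is also incoherent as double-coset arithmetic: distinct double cosets are disjoint, so two double cosets each of size $2|R(H)|$ would give valency $4$ or $2$, never $3$. Finally, verifying $x,y,xy\notin R(H)$ only yields valency $\geqslant2$; it gives no upper bound.

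What is missing is the role of $K$. The paper observes that $y$ normalizes $R(K)$ (clear from the definition of $y$ via $\tau\in\Aut(K)$), so $R(H)^y\cap R(H)\geqslant R(K)$. On the other hand $y$ cannot normalize $R(H)$, for then $\langle x,y,R(H)\rangle\leqslant\Nor_{\Alt(H)}(R(H))<\Alt(H)$, contradicting the generation result. Since $R(K)$ has index $2$ in $R(H)$, this forces $R(H)^y\cap R(H)=R(K)$ and hence $|R(H)yR(H)|=2|R(H)|$. Disjointness of $R(H)xR(H)$ and $R(H)yR(H)$ follows similarly (else $y\in\langle x,R(H)\rangle\leqslant\Nor_{\Alt(H)}(R(H))$), and the valency is $1+2=3$.
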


\begin{proof}
By Lemmas~\ref{lem1} and~\ref{lem12} we have for each $m\geqslant4$ that
\begin{equation}\label{eq11}
\langle x,y,R(H)\rangle=\Alt(H),
\end{equation}
which already implies the connectivity of $\Ga_m$. To prove that $\Ga_m$ is cubic, we show $|R(H)\{x,y\}R(H)|=3|R(H)|$ in the following.

It is straightforward to verify that $x$ and $y$ are involutions normalizing $R(H)$ and $R(K)$, respectively. As a consequence, $R(H)xR(H)=R(H)x$ and $yR(H)y\cap R(H)\geqslant yR(K)y\cap R(K)=R(K)$. Notice that $yR(H)y\cap R(H)\neq R(H)$ for otherwise
\[
\langle x,y,R(H)\rangle\leqslant\Nor_{\Alt(H)}(R(H))<\Alt(H),
\]
contrary to~\eqref{eq11}. We derive that $yR(H)y\cap R(H)=R(K)$ as $R(K)$ has index $2$ in $R(H)$. Accordingly, $|R(H)yR(H)|/|R(H)|=|R(H)|/|yR(H)y\cap R(H)|=2$. Moreover, $R(H)xR(H)\cap R(H)yR(H)=\emptyset$ for otherwise $y\in\langle x,R(H)\rangle$, which would cause a contradiction $\langle x,y,R(H)\rangle\leqslant\langle x,R(H)\rangle\leqslant\Nor_{\Alt(H)}(R(H))<\Alt(H)$ to~\eqref{eq11}. Hence
\begin{align*}
|R(H)\{x,y\}R(H)|&=|R(H)xR(H)|+|R(H)yR(H)|\\
&=|R(H)x|+2|R(H)|=3|R(H)|,
\end{align*}
as desired.
\end{proof}

\begin{lemma}\label{lem16}
$\Cay(\Alt(H^*),\{x,y,z\})$ is a nonnormal Cayley graph of $\Alt(H^*)$ and is isomorphic to $\Ga_m$ by the map $g\mapsto R(H)g$.
\end{lemma}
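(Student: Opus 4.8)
The plan is to build the isomorphism $\phi\colon g\mapsto R(H)g$ by hand and then read off nonnormality from the simplicity of $\Alt(H)$. Put $G=\Alt(H)$ and let $G_1$ be the stabiliser in $G$ of the point $1\in H$, so that $G_1=\Alt(H^*)$ (an even permutation of $H$ fixing $1$ is just an even permutation of $H^*$). The permutations $x$, $y$, $z$ are involutions (shown at the start of Section~2), fix $1$ (immediate from their definitions), and lie in $\Alt(H)$ by Lemmas~\ref{lem13} and~\ref{lem7}; hence $x,y,z\in\Alt(H^*)=G_1$. Because $R(H)$ is regular on $H$, the orbit--stabiliser relation gives $G=R(H)G_1$ with $R(H)\cap G_1=1$, so $\phi\colon G_1\to[G{:}R(H)]$, $g\mapsto R(H)g$, is a bijection. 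Unwinding the definition of the coset graph, for $g,g'\in G$ the vertices $R(H)g$ and $R(H)g'$ of $\Ga_m$ are adjacent precisely when $g'g^{-1}\in R(H)\{x,y\}R(H)$. Thus $\phi$ will be a graph isomorphism from $\Cay(\Alt(H^*),\{x,y,z\})$ onto $\Ga_m$ as soon as we prove
\[
G_1\cap R(H)\{x,y\}R(H)=\{x,y,z\},
\]
for then, for $g,g'\in G_1$, the adjacency of $R(H)g$ and $R(H)g'$ in $\Ga_m$ is equivalent to $g'g^{-1}\in\{x,y,z\}$, i.e.\ to the adjacency of $g$ and $g'$ in $\Cay(\Alt(H^*),\{x,y,z\})$.

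To prove the displayed identity I would treat the two double cosets separately, reusing facts from the proof of Lemma~\ref{lem14}. Since $x$ normalises $R(H)$ we have $R(H)xR(H)=R(H)x$, and $R(h_1)x$ fixes $1$ iff $h_1^x=1$ iff $h_1=1$; hence $G_1\cap R(H)xR(H)=\{x\}$. For the other double coset, recall that $R(H)\cap y^{-1}R(H)y=R(K)$ has index $2$ in $R(H)$ and that $h\notin K$, so $R(H)yR(H)=R(H)y\,\sqcup\,R(H)yR(h)$. Now $R(h_1)y$ fixes $1$ iff $h_1^y=1$ iff $h_1=1$, so $G_1\cap R(H)y=\{y\}$; and $R(h_1)yR(h)$ fixes $1$ iff $h_1^y=h^{-1}$, which has a unique solution $h_1$ since $y$ is a bijection, so $G_1\cap R(H)yR(h)$ has at most one element. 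Finally $z$ fixes $1$, and since $h^2=a^2\in K$ (whence $Kh^{-1}=Kh$ when $m$ is odd and $K(h^{-1}c_{m-3})=Kh$ when $m$ is even), the defining formula for $z$ places it in $R(H)yR(h)$; therefore $G_1\cap R(H)yR(h)=\{z\}$. Combining, $G_1\cap R(H)\{x,y\}R(H)=\{x\}\cup\{y\}\cup\{z\}=\{x,y,z\}$; moreover $x$, $y$, $z$ occupy three distinct right cosets of $R(H)$ (using that $R(H)xR(H)$ and $R(H)yR(H)$ are disjoint, by Lemma~\ref{lem14}, and that $R(H)y\neq R(H)yR(h)$), so they are pairwise distinct. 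Hence $\{x,y,z\}$ is an inverse-closed subset of $\Alt(H^*)\setminus\{1\}$ of size $3$ and $\phi$ is an isomorphism $\Cay(\Alt(H^*),\{x,y,z\})\cong\Ga_m$.

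For nonnormality, the coset-graph construction furnishes a faithful action of $G=\Alt(H)$ on $[G{:}R(H)]$ by right multiplication preserving adjacency in $\Ga_m$ (faithful because $\Alt(H)$ is simple and $R(H)\neq\Alt(H)$), so $\Alt(H)\le\Aut(\Ga_m)$. Transporting through $\phi$, the right regular representation $\widehat{\Alt(H^*)}$ inside $\Aut(\Cay(\Alt(H^*),\{x,y,z\}))$ is identified with the point stabiliser $G_1$, regarded as a subgroup of $\Alt(H)\le\Aut(\Ga_m)$. Were $\widehat{\Alt(H^*)}$ normal in $\Aut(\Ga_m)$, it would be normal in $\Alt(H)$; but $G_1$ is a proper nontrivial subgroup of the simple group $\Alt(H)$, which is absurd. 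Hence $\Cay(\Alt(H^*),\{x,y,z\})$ is nonnormal.

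I expect the main obstacle to be the identity $G_1\cap R(H)yR(H)=\{y,z\}$ --- specifically, checking that the non-identity right coset $R(H)yR(h)$ inside $R(H)yR(H)$ meets the point stabiliser $G_1$ in exactly the single point $z$. This is where the explicit definitions of $y$ and $z$, together with $h\notin K$ and $h^2=a^2$, get used; everything else is routine bookkeeping with double cosets and the orbit--stabiliser relation.
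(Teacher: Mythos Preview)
Your proof is correct and follows essentially the same route as the paper's: set up the bijection $g\mapsto R(H)g$, verify that the neighbourhood of the base vertex corresponds to $\{x,y,z\}$, and deduce nonnormality from the simplicity of $\Alt(H)$. The only cosmetic difference is that the paper proves the equivalent statement $R(H)\{x,y,z\}=R(H)\{x,y\}R(H)$ by a counting argument (three cosets on each side, one contained in the other), whereas you compute $G_1\cap R(H)\{x,y\}R(H)=\{x,y,z\}$ by intersecting each of the three right cosets with $G_1$ directly; both rely on the same ingredients from Lemma~\ref{lem14}.
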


\begin{proof}
Let $S=\{x,y,z\}$. Consider the map $\varphi:g\mapsto R(H)g$ from $\Alt(H^*)$ to the vertex set of $\Ga_m$. We see that $\varphi$ is injective as $R(H)\cap\Alt(H^*)=1$, and is therefore bijective as $|\Alt(H^*)|=|\Alt(H)|/|R(H)|$. In particular, $R(H)S$ is a disjoint union of $R(H)x$, $R(H)y$ and $R(H)z$. Then since
\[
R(H)S=R(H)x\cup R(H)y\cup R(H)z\subseteq R(H)\{x,y\}R(H)
\]
and $|R(H)\{x,y\}R(H)|=3|R(H)|$ by Lemma~\ref{lem14}, we conclude that
\[
R(H)S=R(H)\{x,y\}R(H).
\]
For $g_1$ and $g_2$ in $\Alt(H^*)$, $g_1$ is adjacent to $g_2$ in $\Cay(\Alt(H^*),S)$ if and only if $g_2g_1^{-1}\in\{x,y,z\}$, which is equivalent to $R(H)g_2g_1^{-1}\in\{R(H)x,R(H)y,R(H)z\}$. This means that $g_1$ and $g_2$ is adjacent in $\Cay(\Alt(H^*),S)$ if and only if
\[
R(H)g_2g_1^{-1}\subseteq R(H)S=R(H)\{x,y\}R(H),
\]
or equivalently, $R(H)g_1$ is adjacent to $R(H)g_2$ in $\Ga_m$. Therefore, $\varphi$ is a graph isomorphism from $\Cay(\Alt(H^*),S)$ to $\Ga_m$. Moreover, $\Alt(H)$ acts as a group of automorphisms of $\Ga_m$ by right multiplication and $\Alt(H^*)$ is not normal in $\Alt(H)$, whence $\Ga_m$ is a nonnormal Cayley graph of $\Alt(H^*)$.
\end{proof}

\begin{lemma}\label{lem17}
$\Aut(\Alt(H^*),\{x,y,z\})=1$.
\end{lemma}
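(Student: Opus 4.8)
The plan is to reduce the statement to a claim about permutations of the three‑element set $\{x,y,z\}$, and then dispose of the two possibilities. Put $G=\Alt(H^*)$ and $S=\{x,y,z\}$. By Lemmas~\ref{lem14} and~\ref{lem16} the graph $\Ga_m\cong\Cay(G,S)$ is connected, so $\langle x,y,z\rangle=G\cong\A_{2^m-1}$ with $2^m-1\geqslant15>6$. Hence the conjugation action of $\Sym(H^*)$ on $G$ yields an isomorphism $\Aut(G)\cong\Sy_{2^m-1}$, and an automorphism of $G$ is the identity as soon as it fixes each of the generators $x,y,z$. Thus the restriction map $\Aut(G,S)\to\Sym(\{x,y,z\})$ is injective, so $\Aut(G,S)$ embeds into $\Sy_3$, and it remains to show that no non‑identity permutation of $\{x,y,z\}$ is induced by an element of $\Aut(G,S)$.

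For the $3$-cycle: if some $\sigma\in\Aut(G,S)$ cyclically permuted $x,y,z$, then $g\mapsto g^\sigma$ would be an automorphism of $\Ga_m=\Cay(G,S)$ fixing the identity vertex and cyclically permuting its three neighbours, so together with the regular, hence vertex‑transitive, subgroup $\widehat G$ the subgroup $\widehat G\rtimes\langle\sigma\rangle$ of $\widehat G\rtimes\Aut(G,S)$ would be transitive on the edges of $\Ga_m$. Since $G$ is nonabelian simple, Praeger's theorem~\cite{Praeger1999} would then make $\Ga_m$ a normal Cayley graph, contradicting Lemma~\ref{lem16}; alternatively $\Ga_m$ would be arc‑transitive and hence, by~\cite{XFWX2005,XFWX2007}, one of the two graphs on $\A_{47}$, impossible as $2^m-1\neq47$. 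So the image of $\Aut(G,S)$ in $\Sy_3$ has order at most $2$.

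The remaining, and by far the harder, point is to exclude a transposition. Such a $\sigma$ satisfies $\sigma^2=1$ (it then fixes the generators), so it is conjugation by an involution of $\Sy_{2^m-1}$, and since conjugation preserves cycle type on $H^*$ the interchanged pair must have the same number of fixed points on $H$. A direct computation of the fixed‑point sets shows $|\Fix_H(y)|=|\Fix_H(z)|$ in all cases — for odd $m$ because $z=R(h)yR(h^{-1})$ is $\Sym(H)$‑conjugate to $y$, and for even $m$ by a short check, both being $2^{m/2}$ — while $|\Fix_H(x)|=2^{(m-1)/2}\neq2^{(m+1)/2}=|\Fix_H(y)|$ when $m$ is odd and $|\Fix_H(x)|=2^{m/2}=|\Fix_H(y)|$ when $m$ is even. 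Hence for odd $m$ only the transposition swapping $y$ and $z$ needs to be ruled out, whereas for even $m$ all three transpositions do. To rule one out I would argue directly with $x,y,z$ as permutations of $H$: if $\sigma$ fixes one generator and swaps the other two, then $w$ and $w^\sigma$ have equal order for every word $w$ in $x,y,z$; comparing the dihedral subgroups $\langle x,y\rangle$, $\langle x,z\rangle$, $\langle y,z\rangle$, i.e.\ the orders of $xy$, $xz$ and $yz$, which can be extracted by following suitable points through the orbit chains~\eqref{eq1}--\eqref{eq6} for odd $m$ and~\eqref{eq7}--\eqref{eq10} for even $m$, one shows that the orders relevant to the candidate swap differ, a contradiction.

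This last step is the main obstacle — especially the even case, where the cycle type carries no information and one really has to compute inside $\Alt(H)$ (or, alternatively, exhibit for each candidate swap an explicit relation among $x,y,z$ that it breaks). Granting it, the image of $\Aut(G,S)$ in $\Sy_3$ is trivial, whence $\Aut(G,S)=1$.
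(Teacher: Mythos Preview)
Your reduction to a subgroup of $\Sy_3$ is correct and is essentially how the paper begins. Your treatment of the $3$-cycle is also fine, though the paper handles the reduction more efficiently: it observes that $R(a)\in\Aut(\Ga_m)$ fixes the vertex $R(H)$ and its neighbour $R(H)x$ while swapping $R(H)y$ and $R(H)z$, so the local action at $R(H)$ already contains the transposition $(y,z)$; non-arc-transitivity (via~\cite{XFWX2005}) then forces any nontrivial $\sigma\in\Aut(G,S)$ to induce exactly this transposition, for \emph{both} parities of $m$. This bypasses your fixed-point-count case analysis and leaves only the single swap $y\leftrightarrow z$ to be excluded, uniformly.

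The genuine gap is the last step, which you yourself flag with ``Granting it''. You propose to distinguish $y$ from $z$ by comparing the orders of $xy$, $xz$ and $yz$, but you do not carry out this computation, and there is no reason given to expect it to succeed: the displayed orbit chains \eqref{eq1}--\eqref{eq10} concern $xyz$, not $xy$ or $xz$ separately, and do not yield those orders. The paper instead uses a sharper conjugation-invariant: for odd $m$ it compares $|\Fix(y)\cap\Fix(xyx)|$ with $|\Fix(z)\cap\Fix(xzx)|$, and for even $m$ it compares $|\Fix(yxy)\cap\Fix(xyxyx)|$ with $|\Fix(zxz)\cap\Fix(xzxzx)|$. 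In each residue class of $m\pmod 4$ one of these intersections is shown to be empty while an explicit element of $H^*$ is exhibited in the other, giving the contradiction. So as written your argument is a correct outline up to, but not including, the decisive computation, and the invariant you propose is not the one that is actually shown to work.
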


\begin{proof}
Suppose for a contradiction that there exists $1\neq\sigma\in\Sym(H^*)$ with
\[
\{\sigma^{-1}x\sigma,\sigma^{-1}y\sigma,\sigma^{-1}z\sigma\}=\{x,y,z\}.
\]
Then the conjugation action of $\sigma$ induces a nontrivial permutation of $\{x,y,z\}$ as $\langle x,y,z\rangle=\Alt(H^*)$. By Lemma~\ref{lem16} and~\cite[Theorem~1.1]{XFWX2005} we know that $\Ga_m$ is not arc-transitive. Note that $R(a)$ interchanges the vertices $R(H)y$ and $R(H)z$ of $\Ga_m$. In view of the isomorphism $g\mapsto R(H)g$ in Lemma~\ref{lem16}, we have $\sigma^{-1}x\sigma=x$, $\sigma^{-1}y\sigma=z$ and $\sigma^{-1}z\sigma=y$. For $w\in\Sym(H^*)$, denote by $\Fix(w)$ the set of fixed points of $w$ on $H^*$. It follows that
\begin{equation}\label{eqn-odd}
|\Fix(y)\cap\Fix(xyx)|=|\Fix(y)^\sigma\cap\Fix(xyx)^\sigma|=|\Fix(z)\cap\Fix(xzx)|,
\end{equation}
and
\begin{align}\label{eqn-even}
|\Fix(yxy)\cap\Fix(xyxyx)|&=|\Fix(yxy)^\sigma\cap\Fix(xyxyx)^\sigma|\\\nonumber
&=|\Fix(zxz)\cap\Fix(xzxzx)|.
\end{align}

First assume that $m\equiv1\pmod{4}$. Let
\[
M=\langle a^2b\rangle\times\langle c_1c_2\rangle\times\dots\times\langle c_{2i-1}c_{2i}\rangle\times\dots\times\langle c_{m-4}c_{m-3}\rangle.
\]
It is easy to verify that $\Fix(y)=\{1,h\}M\setminus\{1\}$ and
\[
M^x=\langle a^{-1}b\rangle\times\langle a^2c_2\rangle\times\dots\times\langle a^2c_{2i}\rangle\times\dots\times\langle a^2c_{m-3}\rangle.
\]
This implies that
\begin{align*}
\Fix(y)\cap\Fix(xyx)&=\Fix(y)\cap\Fix(y)^x\\
&=(\{1,h\}M\setminus\{1\})\cap(\{1,h^x\}M^x\setminus\{1\})\\
&=(\{1,h\}M\cap\{1,a^2h\}M^x)\setminus\{1\}\\
&=\emptyset.
\end{align*}
However, since the element $b\prod_{i=1}^{m-3}c_i$ of $H^*$ is fixed by both $z$ and $xzx$, we have $|\Fix(z)\cap\Fix(xzx)|>0$, contrary to~\eqref{eqn-odd}.

Next assume that $m\equiv3\pmod{4}$. Let
\[
M=\langle b\rangle\times\langle c_1c_2\rangle\times\dots\times\langle c_{2i-1}c_{2i}\rangle\times\dots\times\langle c_{m-4}c_{m-3}\rangle.
\]
It is easy to verify that $\Fix(z)=\{1,a^2h\}M\setminus\{1\}$ and
\[
M^x=\langle ab\rangle\times\langle a^2c_2\rangle\times\dots\times\langle a^2c_{2i}\rangle\times\dots\times\langle a^2c_{m-3}\rangle.
\]
Hence $\Fix(z)\cap\Fix(xzx)=\Fix(z)\cap\Fix(z)^x=\emptyset$. However, $a^2b\prod_{i=1}^{m-3}c_i$ is fixed by both $y$ and $xyx$, so $|\Fix(y)\cap\Fix(xyx)|>0$. This again contradicts~\eqref{eqn-odd}.

Now assume that $m\equiv2\pmod{4}$. Let
\[
M=\langle c_1\rangle\times\dots\times\langle c_{2i-1}\rangle\times\dots\times\langle c_{m-5}\rangle\times\langle ac_{m-3}\rangle.
\]
It is easy to verify that $\Fix(x)=M\setminus\{1\}$ and thence
\[
\Fix(yxy)\cap\Fix(xyxyx)=\Fix(x)^y\cap\Fix(x)^{yx}=\emptyset.
\]
However, $a^2b(\prod_{i=1}^{(m-4)/2}c_{2i})(\prod_{i=0}^{(m-6)/4}c_{4i-1})$ is fixed by both $zxz$ and $xzxzx$. Thus,
\[
|\Fix(zxz)\cap\Fix(xzxzx)|>0=|\Fix(yxy)\cap\Fix(xyxyx)|,
\]
contrary to~\eqref{eqn-even}.

Finally assume that $m\equiv0\pmod{4}$. Then in the same vein as above we have $\Fix(zxz)\cap\Fix(xzxzx)=\emptyset$ while the element $b(\prod_{i=0}^{(m-4)/2}c_{2i})(\prod_{i=0}^{(m-4)/4}c_{4i-1})$ of $H^*$ is fixed by both $yxy$ and $xyxyx$. This causes
\[
|\Fix(yxy)\cap\Fix(xyxyx)|>0=|\Fix(zxz)\cap\Fix(xzxzx)|,
\]
contradicting~\eqref{eqn-even}.
\end{proof}

In the following lemma we prove that the full automorphism of $\Ga_m$ is isomorphic to $\A_{2^m}$. Some of the arguments here were used in the proof of~\cite[Theorem~1.2]{ZF2010}.

\begin{lemma}\label{lem15}
$\Aut(\Ga_m)\cong\A_{2^m}$.
\end{lemma}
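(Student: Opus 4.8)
The plan is to show that $\Aut(\Ga_m)$ acts on the vertex set $[\Alt(H){:}R(H)]$ of size $2^m$ as a $2$-transitive group, and then to pin it down using Lemma~\ref{lem2} together with what we already know about the graph. Since $\Ga_m=\Cos(\Alt(H),R(H),R(H)\{x,y\}R(H))$ and $\Alt(H)=\langle x,y,R(H)\rangle$ acts faithfully on the cosets (because $R(H)$ is core-free in $\Alt(H)$, as $\Alt(H)$ is simple and $R(H)\neq\Alt(H)$), we may regard $A:=\Aut(\Ga_m)$ as a permutation group on $2^m$ points containing the $2$-transitive group $\Alt(H)^{[\Alt(H):R(H)]}$; hence $A$ itself is $2$-transitive, and Lemma~\ref{lem2} applies to $A$. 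So the three cases to consider are $A\leqslant\AGL_m(2)$, $\PSL_2(2^m-1)\leqslant A\leqslant\PGaL_2(2^m-1)$, and $\A_{2^m}\leqslant A\leqslant\Sy_{2^m}$.

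First I would rule out case~(ii): there $|A|$ divides $|\PGL_2(2^m-1)|=(2^m-1)(2^m-2)2^m$, whose $2$-part is $2^{m+1}$ (as $2^m-1$ is odd and $2^m-2=2(2^{m-1}-1)$), so a Sylow $2$-subgroup of $A$ would be dihedral or cyclic of order dividing $2^{m+1}$; but $A$ already contains $\Alt(H)$, which contains the direct product $R(H)\cong\D_8\times\ZZ_2^{m-3}$ inside its point stabilizer's complement — more directly, $\Alt(H)$ has elementary abelian $2$-subgroups of rank much larger than $2$ for $m\geqslant4$, so $A$ cannot embed in $\PGaL_2(2^m-1)$. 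Next, case~(i): if $A\leqslant\AGL_m(2)$ then the point stabilizer $A_1\leqslant\GL_m(2)$, and since $\Ga_m$ is cubic the subgroup of $A_1$ fixing a neighbour and acting on the graph is highly restricted; more usefully, any $g\in A_1$ has fixed-point set a subspace of $\mathbb{F}_2^m$, hence of size a power of $2$. I would exhibit an element of $\langle x,y,R(H)\rangle_1=\langle x,y,z\rangle$ whose fixed-point count on the $2^m$ vertices is not a power of $2$ — the element $(xyz)^8$ is the natural candidate, since Lemmas~\ref{lem6} and~\ref{lem11} compute its cycle structure, and one checks it fixes $5\cdot 2^{m-3}$ vertices of $\Ga_m$ (the computation is parallel to the one in the proofs of Lemmas~\ref{lem1} and~\ref{lem12}), which is not a $2$-power. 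This contradiction kills case~(i).

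Therefore case~(iii) holds: $\A_{2^m}\leqslant A\leqslant\Sy_{2^m}$. To decide between $\A_{2^m}$ and $\Sy_{2^m}$, I would check that $A$ consists of even permutations on the $2^m$ vertices. It suffices to show the generators lie in $\A_{2^m}$; since $A$ is generated by $\Alt(H)$ (acting by right multiplication on cosets) together with any extra automorphisms, and a transitive simple group such as $\Alt(H)$ always acts as even permutations, the image of $\Alt(H)$ is in $\A_{2^m}$; if $A=\Sy_{2^m}$ then $[A:\Alt(H)^{\#}]$ would be enormous, so one instead argues directly that $A$ preserves the parity of vertex-permutations. Concretely, $\Ga_m$ is a cubic graph with a $2$-transitive automorphism group containing $\Alt(H)$, and the only $2$-transitive groups strictly between are handled: if $A=\Sy_{2^m}$ then $\Sy_{2^m}$ would act on the edge set with the point stabilizer $\Sy_{2^m-1}$ having orbits on neighbours of sizes summing to $3$, forcing an orbit of size $1$ or the trivial action, which is incompatible with $\Sy_{2^m-1}$ being the stabilizer; more cleanly, a connected cubic graph on $n$ vertices with $\Sy_n\leqslant\Aut$ forces $n\leqslant 4$, absurd for $n=2^m\geqslant 16$. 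Hence $A=\A_{2^m}$, i.e.\ $\Aut(\Ga_m)\cong\A_{2^m}$.

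The main obstacle I anticipate is the bookkeeping in case~(i): one must translate the cycle structure of $(xyz)^8$ from its action on $H$ (given by Lemmas~\ref{lem6} and~\ref{lem11}) into its action on the $2^m$ vertices $[\Alt(H){:}R(H)]$ of $\Ga_m$, since these are different sets of the same size, and then verify that the fixed-point count is genuinely $5\cdot2^{m-3}$ rather than a power of $2$ — this is exactly the style of computation appearing in Lemmas~\ref{lem1} and~\ref{lem12}, so it is routine but must be done carefully, separately for $m$ odd and $m$ even. A secondary subtlety is making rigorous the final step $A\neq\Sy_{2^m}$; the quickest route is to invoke that $\Ga_m$ has valency $3$ while $A$ is $2$-transitive, so the stabilizer $A_1$ of a vertex is transitive on the remaining $2^m-1$ vertices yet must also fix setwise the $3$-element neighbourhood, which for $A_1=\Sy_{2^m-1}$ with $2^m-1\geqslant 15$ is impossible since $\Sy_{2^m-1}$ has no subgroup of index $3$.
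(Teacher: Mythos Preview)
Your approach rests on a fundamental misidentification of the vertex set. The graph $\Ga_m=\Cos(\Alt(H),R(H),R(H)\{x,y\}R(H))$ has vertex set $[\Alt(H){:}R(H)]$, whose size is $|\Alt(H)|/|R(H)|=(2^m)!/(2\cdot 2^m)=(2^m-1)!/2$, not $2^m$. Equivalently, by Lemma~\ref{lem16} the graph is a Cayley graph on $\Alt(H^*)\cong\A_{2^m-1}$, so it has $|\A_{2^m-1}|$ vertices. Consequently $A=\Aut(\Ga_m)$ is a permutation group on $(2^m-1)!/2$ points, and Lemma~\ref{lem2} (which classifies $2$-transitive groups of degree $2^m$) simply does not apply to $A$. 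In fact $A$ cannot be $2$-transitive on vertices at all: a $2$-transitive automorphism group forces the graph to be complete or edgeless, whereas $\Ga_m$ is cubic on far more than four vertices. Your own closing argument (``$A_1$ transitive on the remaining vertices yet fixing a $3$-element neighbourhood setwise'') already rules out $2$-transitivity of $A$, so the premise of your case analysis is self-contradictory.

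What you have conflated is the natural action of $\Alt(H)$ on the set $H$ (degree $2^m$, with point stabilizer $\Alt(H^*)\cong\A_{2^m-1}$; this action \emph{is} $2$-transitive, and that is precisely what Lemmas~\ref{lem5} and~\ref{lem10} establish en route to connectivity) with the right-multiplication action of $\Alt(H)$ on $[\Alt(H){:}R(H)]$ (degree $(2^m-1)!/2$, with point stabilizer $R(H)$). The element $(xyz)^8$ and its fixed-point count $5\cdot 2^{m-3}$ refer to the first action; there is no direct translation to fixed points on the coset space. The paper's proof proceeds along entirely different lines: it uses Lemma~\ref{lem16} to view $\Ga_m$ as a Cayley graph on $G\cong\A_{2^m-1}$, invokes~\cite{XFWX2005} to see $A$ is not arc-transitive so $A_v$ is a $2$-group, applies the structure theorem of~\cite{FPW2002} for automorphism groups of Cayley graphs on simple groups, and then uses Guralnick's classification~\cite{Guralnick1983} together with the normalizer condition $\Nor_A(G)=G$ from Lemma~\ref{lem17} to pin down $A\cong\A_{2^m}$.
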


\begin{proof}
Let $A=\Aut(\Ga_m)$ and $v$ be a vertex of $\Ga_m$. Then by Lemma~\ref{lem16}, $A$ has a nonnormal vertex-regular subgroup $G$ which is isomorphic to the alternating group $\A_{2^m-1}$. Further, $\Nor_A(G)=G$ by Lemma~\ref{lem17}. Note also that $\Ga_m$ is connected and cubic as Lemma~\ref{lem14} asserts. We derive from~\cite[Theorem~1.1]{XFWX2005} that $A$ is not transitive on the arc set of $\Ga_m$, and so $A_v$ is a $2$-group. Consequently, $|A|/|G|=|GA_v|/|G|=|A_v|/|G\cap A_v|=|A_v|$ is a power of $2$. Since every nontrivial $G$-conjugacy class has size greater than $3$, it follows from~\cite[Theorem~1.1]{FPW2002} that one of the following two cases occurs:
\begin{itemize}
\item[(i)] $\Soc(A)$ is a nonabelian simple group containing $G$ as a proper subgroup;
\item[(ii)] $A$ has a nontrivial normal subgroup $N$ such that $N$ is not transitive on the vertex set of $\Ga_m$ and $\Soc(A/N)$ is a nonabelian simple group containing $GN/N\cong G$.
\end{itemize}

First assume that case~(i) occurs. Then as $|\Soc(A)|/|G|$ is a power of $2$, we have $\Soc(A)=\A_{2^m}$ by~\cite[Theorem~1]{Guralnick1983}, and so $A\cong\A_{2^m}$ or $\Sy_{2^m}$. If $A\cong\Sy_{2^m}$, then $\Nor_A(G)\cong\Sy_{2^m-1}$, contrary to the conclusion that $\Nor_A(G)=G$. Therefore, $A\cong\A_{2^m}$.

Next assume that case~(ii) occurs. In this case, $N\cap G=1$ as $GN/N\cong G$. Hence $|N|=|N|/|N\cap G|=|NG|/|G|$ divides $|A|/|G|$. In particular, $N$ is a $2$-group. From the construction of $\Ga_m$ we know that $A$ has a subgroup $B$ that is isomorphic to $\A_{2^m}$ and contains $G$. Consider the action $\phi$ of $B$ on $N$ by conjugation. Since $B$ is a simple group, either $\ker(\phi)=1$ or $\ker(\phi)=B$. If $\ker(\phi)=B$, then $B$ centralizes $N$ and so $N\leqslant\Nor_A(G)=G$, contradicting the condition that $N\cap G=1$. Hence we have $\ker(\phi)=1$. Then $B\cong\A_{2^m}$ is isomorphic to an irreducible subgroup of $\Aut(N/\Phi(N))\cong\PSL_d(2)$ for some positive integer $d$ with $2^d\leqslant|N|$, where $\Phi(N)$ is the Frattini subgroup of $N$. It follows that $d\geqslant2^m-2$ according to~\cite[Proposition~5.3.7]{KL1990}. Hence $\nu_2(|N|)\geqslant2^m-2$, where $\nu_2$ is the $2$-adic valuation. Moreover, $N$ must be semiregular on the vertex set of $\Ga_m$, for otherwise the quotient graph of $\Ga_m$ with respect to $N$ would have valency $2$ and so could not admit $A/N$ as a group of automorphisms. Accordingly,
\[
\nu_2(|N|)\leqslant\nu_2(|\A_{2^m-1}|)=\sum_{i=1}^\infty\left\lfloor\frac{2^m-1}{2^i}\right\rfloor-1<\sum_{i=1}^\infty\frac{2^m-1}{2^i}-1=2^m-2.
\]
This contradicts the conclusion $\nu_2(|N|)\geqslant2^m-2$, not possible.
\end{proof}

\noindent\textsc{Acknowledgements.} This research was supported by the National Natural Science Foundation of China (Grant No.~11501011 and 11671030). The authors are very grateful to Prof.~Marston Conder for helpful discussion and the anonymous referees for their comments to improve the paper.

\end{document}